\documentclass[12pt]{article}
\def\equ{\not\equiv }
\pagestyle{plain} \topmargin -0.5in \evensidemargin 0.2in
\oddsidemargin 0.2in \textheight 9.1in \textwidth 6.1in

\usepackage{amsthm}
\usepackage{amsfonts}
\flushbottom

\theoremstyle{definition}
\newtheorem{theorem}{Theorem}
\newtheorem{definition}{Definition}
\newtheorem{lemma}{Lemma}

\newcommand{\sss}[1]{\sum\limits_{#1\in S}}
\newcommand{\intst}{\int_s^t}
\newcommand{\intsss}{\int_s^t \sum\limits_{k\in S}}
\newcommand{\ol}{\overline}

\newcommand{\e}[3]{\  e^{\int_{#1}^{#2} q_{ii}(#3)d#3}\ }

\newcommand{\be}{\begin{equation}}
\newcommand{\ee}{\end{equation}}
\newcommand{\bee}{\begin{eqnarray*}}
\newcommand{\eee}{\end{eqnarray*}}
\newcommand{\beb}{\begin{eqnarray}}
\newcommand{\eeb}{\end{eqnarray}}

\def\footnotemark{}

\title{Existence and Regularity of a Nonhomogeneous
Transition Matrix under Measurability
Conditions$^*$\thanks{$^*$Supported by NSFC and RFDP}}
\author{Liuer Ye$^1$, Xianping Guo$^1$, and On$\acute{\rm{e}}$simo
Hern$\acute{\rm{a}}$ndez-Lerma$^{2,\S}$\footnote{$^\S\!\!$ The research of this author was partially supported by CONACYT grant 45693-F} \\
$^1$ The School of Mathematics and Computational Science \\
 \ Zhongshan University, Guangzhou, P. R. China\\
  $^2$Departamento de
Matem\'aticas,  CINVESTAV--IPN,\\
  A.Postal 14-740, M$\acute{\rm{e}}$xico D.F.  07000, Mexico\\
E--mail: (LY) yeliuer@hotmail.com, (XPG) mcsgxp@mail.sysu.edu.cn, \\
(OHL) ohernand@math.cinvestav.mx\\
\\
--- January 2007---\\
---Revised: November 2007---}
\date{}
\begin{document}
\maketitle

{\bf Abstract:}  This paper is about the existence and regularity of
the transition probability matrix of a nonhomogeneous
continuous-time Markov process with a countable state space. A
standard approach to prove the existence of such a transition matrix
is to begin with a {\it continuous} (in $t$) and {\it conservative}
matrix $Q(t)=[q_{ij}(t)]$ of nonhomogeneous transition rates
$q_{ij}(t)$, and use it to construct the transition probability
matrix. Here we obtain the same result except that the $q_{ij}(t)$
are only required to satisfy a mild {\it measurability} condition,
and $Q(t)$ may {\it not} be conservative. Moreover, the resulting
transition matrix is shown to be the {\it minimum} transition matrix
and, in addition, a necessary and sufficient condition for it to be
{\it regular} is obtained. These results are crucial in some
applications of nonhomogeneous continuous-time Markov processes,
such as stochastic optimal control problems and stochastic games,
which motivated this work in the first place.

{\bf Key words}: Nonhomogeneous continuous--time Markov chains, nonhomogeneous transition rates, Kolmogorov equations, minimum transition matrix.

{\bf AMS 2000} subject classifications: 60J27, 60J35, 60J75

{\bf Suggested running head}: Nonhomogeneous transition matrices
under measurability conditions


\section{INTRODUCTION} \label{intro}
Nonhomogeneous continuous-time Markov processes have applications in
a wide variety of contexts, including stochastic control problems
\cite{GH03b,Guo03,Guo02,HuQY96,K.P75, Puterman94,WuCB97a},
stochastic games \cite{GH05, GH03a,Sennott99,Sinha04}, and queueing
systems and stochastic networks \cite{B03,Sennott99,Stidham93}, to
name a few. As is well known, such a Markov process is uniquely
determined by its transition probability matrix
$P(s,t)=[P_{ij}(s,t)]$ (for $i, j \in S$ and $0\le s \le t$), which
in turn is usually constructed from a given matrix
$Q(t)=[q_{ij}(t)]$ of transition rates $q_{ij}(t)$, for $t\geq 0$.
Therefore, a natural question is, under what conditions on $Q(t)$ is
the transition matrix $P(s,t)$ uniquely determined?

To answer this question, a standard approach---which can be traced
back to Feller's 1940 paper \cite{F40}---is to assume that the
transition rates $q_{ij}(t)$ are {\it continuous} in $t\geq 0$;
see, e.g.,
\cite{B03,GS96,GH05,GH03a,GH03b,Guo03,HuQY96,LiuJY04,WuCB97a}.
This continuity requirement, however, imposes severe restrictions
in some applications, for instance, in stochastic control and game
theory, where discontinuous ``policies'' typically lead to
discontinuous transition rates. To illustrate this situation, which was
the main motivation for this work, let us consider the following
example.

{\bf Example.}
Consider a single-server queueing system in which the state
variable $i$ denotes the number of jobs in the system at each time $t\ge 0$.
Suppose that a controller wants to control the system's
service rate $\mu$ according to the current state $i\in
S:=\{0,1,\ldots\}$. When the state is $i$ at some time $t$,
the controller takes a service rate $\mu$ from a given finite set $A(i)$
of available ``actions'' at state $i$; then the system transfers to
another state $j$ according to the transition rate induced by the chosen
$\mu$, and the
process is repeated. The choice of service rates is done according to
so-called {\it control policies} $\pi=\{\pi_t,\;t\geq 0\}$. If the controller
is using a particular policy $\pi=\{\pi_t,\;t\geq 0\}$ and the state at time
$t$ is
$i\in S$, then the controller takes the service rate $\pi_t(i)\in A(i)$.
To be more specific, consider the policy $\pi$ given by
\begin{equation}
\pi_t(i):= \sum_{k=0}^{\infty} \mu_k(i) {\bf 1} _{[k, k+1)} (t),
\end{equation}
with $\mu_k(i) \in A(i)$ for all $i\in S$ and $k\ge 0$. Hence, when
using this policy, if the present state is $i$, then the controller
chooses the action $\mu_k(i)\in A(i)$ during the time interval
$[k,k+1)$. Obviously, $\pi_t(i)$ is measurable in $t\ge 0$, but
not continuous. Similarly, if $q^\mu_{ij}$ denotes the transition rate
from $i$ to $j$ under $\mu\in A(i)$, then the matrix $Q^\pi(t)=[q^\pi_{ij}(t)]$ of transition rates when using $\pi$ has elements
$$
q^\pi_{ij}(t):=\sum^\infty_{k=0} q^{\mu_k(i)}_{ij}{\bf 1}_{[k,k+1)}(t).
$$
Therefore, the transition rates are measurable in $t\geq 0$ but {\it not}
continuous, and so we cannot use the standard Markov chain theory to show the
existence of transition probability functions $P^\pi_{ij}(s,t)$ induced by
the discontinuous control policy $\pi$ defined in (1.1). An analogous
situation occurs in stochastic game problems \cite{GH05,GH03a}, where
discontinuous ``strategies'' usually lead to
discontinuous transition rates.  This is precisely what motivated our
paper--- to establish the existence and regularity
of a nonhomogeneous transition matrix without requiring the transition
rates to be continuous.

Summarizing, our main objective is to replace the above-mentioned
continuity requirement by a mild measurability condition under
which we obtain the existence and uniqueness of a transition
matrix $P(s,t)$, even if the $Q(t)$ matrix is {\it not}
conservative. (See Section 2 for definitions.) In fact, the
existence and uniqueness of $P(s,t)$ under a measurability
condition have been considered in \cite{LiuJY04}, but the results
there are stated {\it without} proofs and assuming,
in addition, that $Q(t)$ is {\it conservative}. We also
obtain some new results (see, for instance, Theorems 2(i) and 2(iii),
Theorem 3(ii)).

In this paper, firstly, we introduce a precise definition of a
nonhomogeneous transition matrix (Definition 1), which is weaker
than previous definitions, e.g., as in \cite{B03,GS96,HuDH83}. Even
in this weaker context, we can obtain some key properties of the
transition matrix (Theorem 1). Secondly, given a $Q(t)$ matrix
satisfying our measurability condition, we construct a
nonhomogeneous transition matrix (Theorem 2), and, finally, we give
a necessary and sufficient condition for this transition matrix to
be unique and regular (Theorem 3).

The rest of this paper is organized as follows. In Section 2 we
present the definitions and main results concerning nonhomogeneous
transition matrices. The proofs of our results are all given in
Section 3. In Section 4 we state some conclusions.


\section{MAIN RESULTS}
\label{sec1} \setcounter{equation}{0}

\subsection{Nonhomogeneous Pretransition Matrices} \label{sec1:subsec1}

Throughout the following $S$ denotes a given countable set.

\begin{definition}
\label{def}
 A real-valued matrix
$P(s,t)=(P_{ij}(s,t),\ i,j\in S)$, defined for all
$0\le s\le t<\infty$, is called a nonhomogeneous {\it pretransition matrix} if it satisfies the following for every $i,j\in S$ and $0\leq s\leq t<\infty$:
\begin{enumerate}
\item \be
 \label{s11}
 \!\!\!\!\!\!\!\!\!\!\!\!\!\!   P_{ij}(s,t)\ge 0,\  {\rm and} \  \sss{j} P_{ij}(s,t)\le 1;
\ee
\item \be
  \label{s12}
 \!\!\!\!\!\!\!\!\!\!\!\!\!\!   P_{ij}(s,t)=\sss{k}
P_{ik}(s,u)P_{kj}(u,t) \ \ \   \forall  s\le u\le t; \ee
\item \be
 \label{s13}
 \!\!\!\!\!\!\!\!\!\!\!\!\!\!  \lim_{h \to 0^+} | P_{ij}(s,s+h) - \delta_{ij} | =0\ \
 {\rm uniformly\  in}\  j\in S, \ {\rm and} \ P_{ij}(s,s)=\delta_{ij},
 \ee
where $\delta_{ij}$ stands for the Kronecker symbol $(\delta_{ij}=0
 $ if $  i\ne j\ ;  \delta_{ij}=1  $ if $  i=j).$
 \end{enumerate}
If in addition $ \sum_{j\in S} P_{ij}(s,t)\equiv 1$ for every $i\in
S$,  then $P(s,t)$ is said to be a nonhomogeneous {\it transition
probability matrix}, and its elements $P_{ij}(s,t)$ are called
transition functions.

\end{definition}

The equation (2.2) is known as the {\it Chapman-Kolmogorov}
$(C\!\!-\!\!K)$ {\it equation}. Our definition of a pretransition
matrix is weaker than that in \cite{B03,GS96,HuDH83}, but still we
can obtain most of the standard results. In particular, the
following theorem is essentially the same as Theorem 1.1 and Theorem
1.2 in Chapter 3 of \cite{HuDH83}.

\begin{theorem}
\label{dd0} For any nonhomogeneous pretransition probability matrix
$P(s,t)$ we  have:
\begin{enumerate}
     \item \be \label{s14}
  \!\!\!\!\!\!\!\!\!\!\!\!\!\! P_{ii}(s,t)>0 \ \   \forall \  i\in S,\ 0\le s\le t<\infty;
 \ee
   \item \be
\label{s15}
 \!\!\!\!\!\!\!\!\!\!\!\!\!\!  |P_{ij}(u,t)-P_{ij}(v,t)| \le 1-P_{ii}(u\wedge v,u\vee v) \ \  \forall \  i,j\in S,  0\le u,v\le t,
\ee where $u\wedge v=\min(u,v), \ u\vee v=\max(u,v)$;
     \item $P_{ij}(s,t)$ is continuous in $s \in [0,t]$
(right-continuous in $0$, left-continuous in $t$), and uniformly in
$t\ge 0$ and $j\in S$;
     \item   $P_{ij}(s,t)$ is continuous in $t \in
[s,+\infty) $ (right-continuous in $s$), and uniformly in $j\in S$;
    \item  The following holds:
$$\lim_{t\to s^+} \frac{P_{ii}(s,t)-1}{t-s} = \lim_{t-s\to 0^+}
 \frac{P_{ii}(s,t)-1}{t-s} \ =:q_{ii}(s)\leq 0 ,$$
 $$\lim_{t\to s^+} \frac{P_{ij}(s,t)}{t-s} = \lim_{t-s\to 0^+}
 \frac{P_{ij}(s,t)}{t-s} \  =:q_{ij}(s)\geq 0 \ \ {\rm for} \ \  j\not= i;$$
    \item   For every $i\in S$ and $s\geq 0$, $\sum_{j\ne i}q_{ij}(s)\leq
-q_{ii}(s) $. (Hence, $q_{ij}(s)\leq q_i(s)$ for all $i,j\in S$ and
$s\geq 0$, where $q_i(s):=-q_{ii}(s)\geq 0$.)
\end{enumerate}
\end{theorem}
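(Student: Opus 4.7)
The plan is to prove the six parts roughly in the order (ii), then (iii) and (iv) together, then (i), and finally (v) and (vi), each step leaning on the previous.

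For (ii), assume without loss of generality $u \le v \le t$. Applying the C--K equation with split point $v$:
\[
P_{ij}(u,t) - P_{ij}(v,t) = (P_{ii}(u,v) - 1)\,P_{ij}(v,t) + \sum_{k\ne i} P_{ik}(u,v)\,P_{kj}(v,t).
\]
The first summand lies in $[-(1-P_{ii}(u,v)),\,0]$ and the second in $[0,\,1-P_{ii}(u,v)]$ (using $\sum_k P_{ik}(u,v)\le 1$ from Definition~1(i)), which yields the stated bound.

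For (iii), the inequality in (ii) has right-hand side independent of $j$ and $t$, so continuity of $P_{ij}(\cdot,t)$ at $s$ uniformly in $j,t$ reduces to showing $P_{ii}(u,v)\to 1$ as $u,v\to s$ with $u\le v$. For $u,v\to s^+$, C--K gives $P_{ii}(s,v)\le P_{ii}(s,u)P_{ii}(u,v)+(1-P_{ii}(s,u))$, whence $P_{ii}(u,v)\ge P_{ii}(s,u)+P_{ii}(s,v)-1$, which tends to $1$ by Definition~1(iii) at $s$. For (iv) (continuity in $t$) and for the left-limit $u,v\to s^-$ in (iii), use C--K in the form $P_{ij}(s,t+h)=\sum_k P_{ik}(s,t)P_{kj}(t,t+h)$ and Definition~1(iii) at the relevant starting time, interchanging the $h\to 0^+$ limit with the infinite sum uniformly in $j$ by a tail-splitting argument: for $\varepsilon>0$, pick finite $K\subset S$ with $\sum_{k\notin K}P_{ik}(s,t)<\varepsilon$, handle the finite $K$-part by the uniform-in-$j$ convergence, and bound the tail by $2\varepsilon$.

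For (i), at each $u\in[s,t]$ choose $\delta(u)>0$ with $P_{ii}(u,u+h)>1/2$ on $[0,\delta(u)]$ (by Definition~1(iii)). The open cover $\{(u-\delta(u),u+\delta(u)):u\in[s,t]\}$ of the compact interval admits a finite subcover, from which one constructs a partition $s=\tau_0<\tau_1<\cdots<\tau_n=t$ with $\tau_{k+1}-\tau_k<\delta(\tau_k)$ for each $k$, so $P_{ii}(\tau_k,\tau_{k+1})>1/2$. Iterating C--K yields $P_{ii}(s,t)\ge\prod_{k=0}^{n-1}P_{ii}(\tau_k,\tau_{k+1})>(1/2)^n>0$.

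Parts (v) and (vi) are then standard. For (v), (i) together with the C--K sub-multiplicative bound $P_{ii}(s,s+h_1+h_2)\ge P_{ii}(s,s+h_1)P_{ii}(s+h_1,s+h_1+h_2)$ yields, via a subadditivity-limit argument, that $(1-P_{ii}(s,s+h))/h$ converges as $h\to 0^+$; the off-diagonal $q_{ij}(s)$ are treated analogously from C--K decompositions of $P_{ij}(s,s+h)$. For (vi), dividing $\sum_j P_{ij}(s,s+h)\le 1$ by $h$ gives $(1-P_{ii}(s,s+h))/h\ge\sum_{j\ne i}P_{ij}(s,s+h)/h$, and Fatou's lemma on the right yields $-q_{ii}(s)\ge\sum_{j\ne i}q_{ij}(s)$. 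The main technical obstacle will be the left-continuity assertions in (iii) and (iv): Definition~1(iii) only provides uniform-in-$j$ right-continuity at each fixed starting time, so transferring this to the opposite side and to the other time variable requires the tail-splitting trick combined with (ii), and this is where the bulk of the work lies.
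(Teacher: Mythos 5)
Your plan for parts (i), (ii), (iii), (iv) and (vi) tracks the paper's own proof quite closely: the C--K decomposition for (ii), the bound $|P_{ij}(u,t)-P_{ij}(v,t)|\le 1-P_{ii}(u\wedge v,u\vee v)$ combined with (2.3) for continuity in $s$, a product bound over a partition for (i), a tail-splitting/dominated-convergence interchange for (iv), Fatou for (vi), and the subadditivity of $f(s,t)=-\log P_{ii}(s,t)$ for the diagonal limit in (v). The genuine gap is the off-diagonal half of (v). You dispose of it with ``the off-diagonal $q_{ij}(s)$ are treated analogously from C--K decompositions of $P_{ij}(s,s+h)$,'' but nothing analogous is available: $-\log P_{ij}$ satisfies no subadditive inequality when $j\ne i$, and a bare C--K expansion of $P_{ij}(s,s+h)$ does not let you compare $P_{ij}(s,t)/(t-s)$ across refinements of $[s,t]$. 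The existence of $\lim_{t-s\to 0^+}P_{ij}(s,t)/(t-s)$ for $j\ne i$ is the hard content of the theorem (already in the homogeneous case it is the Kolmogorov--Doob theorem on the existence of $q_{ij}$), and the paper devotes all of its Lemma 1 to it: it introduces taboo probabilities ${}_AP_{i,B}(s,s+mh)$, proves the first-entrance decomposition (3.1), and from it derives the matched two-sided bounds
\[
(1-3\varepsilon)\sum_{l=1}^{n}P_{ij}(s+(l-1)h,s+lh)\ \le\ P_{ij}(s,t)\ \le\ \sum_{l=1}^{n}P_{ij}(s+(l-1)h,s+lh)+{}_AP_{ii}(s,s+nh)\,P_{i,S\setminus\{i\}}(s+nh,t),
\]
which yield $\limsup_{t-s\to 0^+}P_{ij}(s,t)/(t-s)\le(1-3\varepsilon)^{-1}\liminf_{t-s\to 0^+}P_{ij}(s,t)/(t-s)$ and hence, letting $\varepsilon\to 0$, the existence of the limit. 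Your proposal contains no substitute for this squeezing mechanism, so part (v) is not proved.

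A secondary point: the left-limit assertions you rightly flag as the technical obstacle are not actually resolved by your tail-splitting device, which only handles the interchange of a limit with the sum over $k$; the real difficulty is that (2.3) controls $P_{ii}(u,u+h)$ for each \emph{fixed} starting time $u$, whereas $P_{ii}(s-h,s)\to 1$ (needed for left-continuity in (iii)) and $P_{ii}(w,u_{i+1})$ in your finite-subcover construction for (i) involve a \emph{moving} starting point, for which the $\delta$ chosen at the arrival point gives no information. The paper's own write-up is equally terse on this, so I would not treat it as the decisive defect; the missing argument for the off-diagonal derivatives in (v) is.
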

\begin{proof}
See subsection \ref{sec2:subsec1}.
\end{proof}

\subsection{Nonhomogeneous $Q(t)$-Transition Matrices}
\label{sec1:subsec2}

In this subsection we introduce the definition of a nonhomogeneous
$Q(t)$-{\it matrix} and a nonhomogeneous transition matrix induced
by $Q(t)$, which are related by our key hypothesis, Assumption A.

\begin{definition}
\label{def2}
For each $i,j\in S$, let $q_{ij}(t)$ be a
real-valued function defined on $[0,+\infty)$. The matrix function
$Q(t)=(q_{ij}(t), i,j\in S)$ is said to be a {\it nonhomogeneous} $Q(t)$-{\it matrix} on $S$ if for every $i,j\in S$ and $t\geq 0$ satisfies that:
\begin{enumerate}
\item  \be
 \label{s21}
  \!\!\!\!\!\!\!\!\!\!\!\!\!\!  0\le q_{ij}(t)<\infty \ \ {\rm if} \ \ i\neq j, \ \ {\rm and} \  \ 0\le -q_{ii}(t)<\infty;
 \ee
\item   \be
 \label{s22}
  \!\!\!\!\!\!\!\!\!\!\!\!\!\!  \sss{j} q_{ij}(t) \le 0 \ \ \forall \ i\in S.
\ee
\end{enumerate}
If in addition  $\sss{j} q_{ij}(t)=0 $ for all
$i\in S$ and $t \ge 0$, then we say that  $ Q(t)$ is {\it
conservative}.
\end{definition}

We now introduce our measurability-integrability assumption with respect to the Lebesgue measure on $[0,+\infty)$.

\noindent {\bf Assumption A} \  Let $Q(t)$ be a given nonhomogeneous
$Q(t)$-matrix. For every $b\geq a\geq 0$ and $i,j\in S$, we have:
$q_{ij}(t)$ is Borel-measurable in $t\in[a,b]$ if $i\neq j$, and
$q_{ii}(t)$ is integrable on $[a,b]$.

The following definition relates a $Q(t)$-matrix and a pretransition
matrix. (The abbreviations {\it a.e.} (almost everywhere) and {\it
a.a.} (almost all) refer to the Lebesgue measure.)

\begin{definition}
Let $Q(t)=(q_{ij}(t),  i,j\in S)\ (t\ge 0)$ be a nonhomogeneous
$Q(t)$-matrix satisfying Assumption A. If a nonhomogeneous
pretransition  matrix $P(s,t)=(P_{ij}(s,t),  i,j\in S)\ (0\le s\le
t<\infty)$ satisfies that, for every $i,j\in S$ and a.a. $s\geq 0$, the partial
derivatives
\begin{enumerate}
\item   $ \displaystyle  \frac{\partial P_{ij}(s,t)}{\partial s}$ and
\item  \be
 \label{s23}
 \!\!\!\!\!\!\!\!\!\!\!\!\!\! \frac{\partial}{\partial t} P_{ij}(s,t)|_{t=s+}=\lim_{h\to 0^+}
\frac{P_{ij}(s,s+h)-\delta_{ij}}{h} = q_{ij}(s)
    \ee
 \end{enumerate}
exist, then
$P(s,t)$ is called a {\it nonhomogeneous} $Q(t)$-{\it transition
matrix}. Further, such a $Q(t)$-transition matrix is said to be {\it
regular} if $P(s,t)$ is a transition probability matrix and it is
the unique transition probability matrix that satisfies (\ref{s23}).
We call $P_{ij}(s,t)$ a nonhomogeneous $Q(t)$-{\it transition
function} or simply a {\it $Q(t)$-function}.
\end{definition}

\subsection{ Existence of a Nonhomogeneous $Q(t)$-Transition Matrices}
\label{sec1:subsec3}

The question now is, given a nonhomogeneous $Q(t)$-matrix, how can
we ensure the existence of a nonhomogeneous $Q(t)$-transition
matrix? The following Theorem 2 shows that this can be done if the
$Q(t)$-matrix satisfies Assumption A. To obtain a {\it regular}
$Q(t)$-transition matrix, however, we have to go a bit further and
construct a {\it minimum} $Q(t)$-transition matrix on which we can
readily impose conditions for it to be regular. This is done in
Theorem 3.

\begin{theorem}
 \label{dd1}
Given a nonhomogeneous $Q(t)$-matrix satisfying Assumption A, let
\be
    \label{s31}
   \quad\quad\quad\quad\quad\quad\quad\quad  d_{ij}(u) := \delta_{ij}(-q_{ii}(u))\ \ \forall \ i,j\in S,\  u\ge 0,
\ee
 and for each $i,j\in S$ and $0\leq s\leq t<\infty$, define
recursively
 \be \label{s32}
  \quad\quad\quad\quad\quad\quad\quad\quad\quad\quad  \ol P_{ij}^{(0)}(s,t) := \delta_{ij}\ \ e^{\intst q_{ii}(u)du},
\ee
 \be \label{s33}
     \ol P_{ij}^{(n+1)}(s,t)  := \int_s^t \sum\limits_{k\in S}\  e^{\int_s^u
     q_{ii}(v)dv} [q_{ik}(u)+d_{ik}(u)] \ol P_{kj}^{(n)}(u,t) du \ \ \forall n\ge 0.
\ee
 Let
 \be \label{s34}
   \quad\quad\quad\quad\quad\quad\quad\quad\quad\   \ol P_{ij}(s,t)  := \sum_{n=0}^{\infty}\ \  \ol P_{ij}^{(n)}(s,t).
\ee
 Similarly, for each $i,j\in S$ and $0\leq s\leq t<\infty$,
define recursively
 \be
\label{q01}
   \quad\quad\quad\quad\quad\quad\quad\quad\quad  \ol Q_{ij}^{(0)}(s,t) := \delta_{ij}\ \ e^{\intst q_{ii}(u)du},
 \ee
 \be
 \label{q02}
     \ol Q_{ij}^{(n+1)}(s,t)  := \int_s^t \sum\limits_{k\in S}\ \ol
     Q_{ik}^{(n)}(s,u)\
     e^{\int_u^t  q_{jj}(v)dv} [q_{kj}(u)+d_{kj}(u)] du \ \ \forall n\ge 0.
\ee
 Let
 \be
  \label{q03}
     \quad\quad\quad\quad\quad\quad\quad\quad\quad\  \ol Q_{ij}(s,t)  := \sum_{n=0}^{\infty}\ \  \ol Q_{ij}^{(n)}(s,t).
\ee
 Then, for every $i,j\in S$ and $0\leq s\leq t<\infty$, we have
 \vspace{2mm}
\begin{enumerate}
\item  \be
 \label{pqe}
  \!\!\!\!\!\!\!\!\!\!\!\!\!\!  \ol P_{ij}^{(n)}(s,t) = \ol Q_{ij}^{(n)}(s,t) \ {\rm for\  all} \ n\ge
 0, \ {\rm and \ so} \  \ol P_{ij}(s,t)=\ol Q_{ij}(s,t); \ee
 \item \be
 \label{s35}
 \!\!\!\!\!\!\!\!\!\!\!\!\!\!  \ol P_{ij}(s,t) = \int_s^t \sum\limits_{k\in S} \ol P_{ik}(s,v)q_{kj}(v) dv +\delta_{ij};
\ee
\item
 \be
 \label{qbackw}
 \!\!\!\!\!\!\!\!\!\!\!\!\!\!  \ol P_{ij}(s,t) = \int_s^t \sum\limits_{k\in S} q_{ik}(v) \ol P_{kj}(v,t) dv +\delta_{ij};
\ee
 \item  $ \ol P(s,t) =(\ol P_{ij}(s,t), i,j\in S) $ is a
nonhomogeneous $Q(t)$-transition matrix.
\end{enumerate}
\end{theorem}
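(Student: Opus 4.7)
The plan is to exploit systematically that every integrand appearing in the recursions (\ref{s33}), (\ref{q02}) is nonnegative (the bracket $q_{ik}(u)+d_{ik}(u)$ vanishes when $k=i$ and equals $q_{ik}(u)\ge 0$ for $k\ne i$), so Tonelli's theorem, monotone convergence, and all interchanges of sum and integral are available without further comment.

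Part (i) is a direct induction on $n$: the base case $n=0$ is immediate by comparing (\ref{s32}) and (\ref{q01}); for the step, substitute $\ol P^{(n)}_{kj}(u,t)=\ol Q^{(n)}_{kj}(u,t)$ into (\ref{s33}), unfold $\ol Q^{(n)}_{kj}$ via (\ref{q02}), and apply Tonelli to swap the two nested integrations with the sum over the intermediate state, recognizing the resulting expression as (\ref{q02}) at level $n+1$. Summation gives $\ol P_{ij}=\ol Q_{ij}$. For parts (ii) and (iii), summing the series (\ref{s33}) over $n$ via monotone convergence yields the renewal identity
$$\ol P_{ij}(s,t)=\delta_{ij}e^{\int_s^t q_{ii}(u)du}+\int_s^t e^{\int_s^u q_{ii}(v)dv}\sum_{k\ne i}q_{ik}(u)\,\ol P_{kj}(u,t)\,du.$$
Under Assumption A the exponential $e^{\int_s^u q_{ii}(v)dv}$ is absolutely continuous in $s$ with derivative $-q_{ii}(s)e^{\int_s^u q_{ii}(v)dv}$; differentiating by Leibniz' rule and collapsing the exponentials back into $\ol P_{ij}$ via the renewal identity itself gives $\partial_s\ol P_{ij}(s,t)=-\sum_k q_{ik}(s)\ol P_{kj}(s,t)$ a.e.\ in $s$. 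Integrating on $[s,t]$ with terminal value $\ol P_{ij}(t,t)=\delta_{ij}$ delivers the backward equation (\ref{qbackw}), which is (iii); the mirror computation applied to (\ref{q02}), combined with (i), produces the forward equation (\ref{s35}), which is (ii).

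For part (iv) I verify in turn the pretransition conditions (\ref{s11})--(\ref{s13}) and the derivative (\ref{s23}). Nonnegativity is immediate. The row-sum bound follows by induction on partial sums, using (\ref{s22}) and the identity $\int_s^t e^{\int_s^u q_{ii}(v)dv}(-q_{ii}(u))du=1-e^{\int_s^t q_{ii}(u)du}$. For Chapman--Kolmogorov (\ref{s12}) I prove inductively the convolution rule
$$\ol P^{(n)}_{ij}(s,t)=\sum_{m=0}^n\sum_{k\in S}\ol P^{(m)}_{ik}(s,u)\,\ol P^{(n-m)}_{kj}(u,t),\qquad s\le u\le t;$$
the base is the semigroup property of the exponential, and the step splits the outer integral in (\ref{s33}) as $\int_s^t=\int_s^u+\int_u^t$. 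In the $[s,u]$ piece, applying the hypothesis to $\ol P^{(n)}_{lj}(v,t)$ and reindexing $m\mapsto m+1$ produces $\sum_{m=1}^{n+1}\ol P^{(m)}_{ik}(s,u)\ol P^{(n+1-m)}_{kj}(u,t)$, while the $[u,t]$ piece factors via $e^{\int_s^v q_{ii}}=e^{\int_s^u q_{ii}}e^{\int_u^v q_{ii}}$ into $e^{\int_s^u q_{ii}}\ol P^{(n+1)}_{ij}(u,t)=\sum_k\ol P^{(0)}_{ik}(s,u)\ol P^{(n+1)}_{kj}(u,t)$, supplying the missing $m=0$ term. Summing over $n$ yields (\ref{s12}). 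The limit (\ref{s13}) then follows from $\ol P_{ii}(s,s+h)\ge e^{\int_s^{s+h}q_{ii}}\to 1$ (by integrability of $q_{ii}$) combined with $\ol P_{ij}(s,s+h)\le 1-\ol P_{ii}(s,s+h)$ for $j\ne i$, which furnishes the required uniformity in $j\in S$. Finally, the derivative (\ref{s23}) is obtained by decomposing the series (\ref{s34}) as $\ol P_{ij}=\ol P^{(0)}_{ij}+\ol P^{(1)}_{ij}+\sum_{n\ge 2}\ol P^{(n)}_{ij}$: the $n=0$ and $n=1$ contributions, divided by $h$ and evaluated at a.a.\ $s$ (Lebesgue points of $q_{ij}$, which is locally integrable since $q_{ij}\le q_i$ by (\ref{s22})), yield $\delta_{ij}q_{ii}(s)$ and $q_{ij}(s)-\delta_{ij}q_{ii}(s)$ respectively, totaling $q_{ij}(s)$; the tail $\sum_{n\ge 2}\ol P^{(n)}_{ij}(s,s+h)$ is bounded via the recursion by $\int_s^{s+h}\sum_{k\ne i}q_{ik}(u)\int_u^{s+h}q_k(w)dw\,du$ and shown to be $o(h)$ at almost every $s$ by a Fubini rearrangement combined with Lebesgue's differentiation theorem.

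I expect the principal obstacle to be twofold. The Chapman--Kolmogorov convolution identity requires meticulous bookkeeping of the two exponentials, the diagonal correction $d_{ij}$, and the reindexing $m\mapsto m+1$ together with the separate $m=0$ contribution from the second piece of the split integral. More delicate is the $o(h)$ estimate of the tail $\sum_{n\ge 2}\ol P^{(n)}_{ij}(s,s+h)$: since column sums of $Q(t)$ are not controlled, the bound $\sum_{n\ge 1}\ol P^{(n)}_{kj}(u,s+h)\le\int_u^{s+h}q_k(w)dw$ introduces a factor $q_k(w)$ that is not uniformly integrable in $k$, so the required $o(h)$ decay at almost every $s$ must be extracted via a Lebesgue-differentiation argument applied to the outer integral (which is controlled by the integrable function $q_i$), rather than by naive termwise domination over $k$.
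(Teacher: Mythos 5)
Your proposal is sound, and part (i) together with the Chapman--Kolmogorov convolution identity in part (iv) coincide with the paper's argument (your convolution rule is exactly the paper's (\ref{s39}), proved by the same splitting $\int_s^t=\int_s^r+\int_r^t$). Where you genuinely diverge is in (ii), (iii) and the derivative condition (\ref{s23}). The paper never differentiates anything to get (ii)--(iii): it proves by induction the telescoping identities (\ref{s36}) and (\ref{q04}), which express $\int_s^t\sum_k \ol P^{(n+1)}_{ik}(s,v)d_{kj}(v)\,dv$ in terms of the level-$n$ quantities, and then sums over $n$ so that the $d$-terms cancel and the forward and backward \emph{integral} equations fall out as purely algebraic consequences of the recursions. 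You instead sum the recursion into a renewal identity and then differentiate it in $s$ (resp.\ in $t$ for the $\ol Q$-recursion) and re-integrate; this is shorter but shifts the burden onto absolute continuity, so you should record explicitly that $s\mapsto\int_s^t e^{\int_s^u q_{ii}(v)dv}g(u)\,du$ is absolutely continuous (e.g.\ by factoring $e^{\int_s^u q_{ii}}=e^{-\int_0^s q_{ii}}\,e^{\int_0^u q_{ii}}$ and writing it as a product of AC functions) before invoking the fundamental theorem of calculus; with that line added the argument closes. For (\ref{s23}) the paper simply sets $t=s$ in the forward equation, which quietly hides exactly the difficulty you isolate: showing $\frac1h\int_s^{s+h}\sum_k\ol P_{ik}(s,v)q_{kj}(v)\,dv\to q_{ij}(s)$ requires controlling $\sum_{k\neq i,j}\ol P_{ik}(s,v)q_{kj}(v)$ with no bound on the column sums of $Q(t)$. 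Your explicit split into the $n=0,1$ terms plus an $o(h)$ tail, handled by exhausting $S$ by finite sets and applying Lebesgue differentiation to quantities dominated by the integrable $q_i$, is a more honest treatment of the same point and works; what each approach buys is, respectively, brevity (yours) versus avoidance of all measure-theoretic differentiation subtleties except at the very last step (the paper's).
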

\begin{proof}
See subsection \ref{sec2:subsec2}.
\end{proof}

The following theorem states our main results in this paper. It
shows that, whether $Q(t)$ is conservative or not, the
$Q(t)$-transition matrix $\ol P(s,t)$ constructed in Theorem 2 is
{\it minimum}; see (\ref{s55}). Moreover, it gives a reasonably mild
necessary and sufficient condition for $\ol P(s,t)$ to be regular;
see (\ref{s001}).

\begin{theorem}
 \label{dd3}
Assume that $Q(t) $ is a nonhomogeneous $Q(t)$-matrix satisfying
Assumption A, and let $\ol P(s,t)$ be the nonhomogeneous
$Q(t)$-transition matrix in Theorem 2. Then:
 \begin{enumerate}
  \item  $ \ol{P}(s,t)$
is the minimum $Q(t)$-transition matrix, that is, for any
nonhomogeneous $Q(t)$-transition matrix $P(s,t)=(P_{ij}(s,t),\
i,j\in S) $, we have
 \be
  \label{s55}
 \quad\quad\quad\quad   P_{ij}(s,t) \ge \ol P_{ij}(s,t) \ \ \forall \ i,j\in S,  \ 0\le s\le t<\infty.
\ee
  \item  $ \ol P(s,t)$ is regular if and only if
 \be
  \label{s001}
   \quad\quad\quad   \sss{j} \int_s^t \sum\limits_{k\in S} \ol P_{ik}(s,v)q_{kj}(v) dv \equiv
    0 \ \ \forall \ i\in S, \ 0\le s\le t<\infty.
 \ee
 \end{enumerate}
\end{theorem}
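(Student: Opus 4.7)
\emph{Part (i).} The plan is to derive, for an arbitrary $Q(t)$-transition matrix $P(s,t)$, the backward integral identity
\[
P_{ij}(s,t) = \delta_{ij}\,e^{\intst q_{ii}(v)dv} + \intst e^{\int_s^u q_{ii}(v)dv}\sss{k}\bigl[q_{ik}(u)+d_{ik}(u)\bigr] P_{kj}(u,t)\,du,
\]
which is precisely the fixed-point relation built into the construction (\ref{s32})--(\ref{s34}), and then iterate. Writing the Chapman--Kolmogorov equation (\ref{s12}) as $P_{ij}(s,t)-P_{ij}(s+h,t)=\sss{k}[P_{ik}(s,s+h)-\delta_{ik}]P_{kj}(s+h,t)$, dividing by $-h$, and letting $h\downarrow 0$ yields the backward Kolmogorov equation $\partial_s P_{ij}(s,t)=-\sss{k}q_{ik}(s)P_{kj}(s,t)$ for a.a.\ $s\in[0,t]$ (this uses Definition 3(i) together with a domination argument for exchanging $\lim_{h\downarrow 0}$ and $\sss{k}$). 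The Lipschitz-type bound in Theorem 1(ii) combined with the local integrability of $q_{ii}$ from Assumption A shows that $s\mapsto P_{ij}(s,t)$ is absolutely continuous on $[0,t]$, so using the integrating factor $e^{\int_0^s q_{ii}(v)dv}$ and integrating from $s$ to $t$ produces the displayed identity. Because $q_{ik}(u)+d_{ik}(u)\ge 0$ for every $k\in S$ and $P_{kj}\ge 0$, a straightforward induction on $N$ gives $P_{ij}(s,t)\ge \sum_{n=0}^{N}\ol P_{ij}^{(n)}(s,t)$; letting $N\to\infty$ yields (\ref{s55}).

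\emph{Part (ii).} This reduces cleanly to part (i) and Theorem 2(ii). Summing the forward identity (\ref{s35}) over $j\in S$ and exchanging $\sss{j}$ with $\intst \sss{k}$ (Tonelli applied separately to the nonnegative off-diagonal part and to the nonpositive diagonal part of $\sss{j}q_{kj}$) gives
\[
\sss{j}\ol P_{ij}(s,t) \;=\; 1 \;+\; \sss{j}\intst \sss{k} \ol P_{ik}(s,v)\,q_{kj}(v)\,dv.
\]
Hence (\ref{s001}) is equivalent to $\sss{j}\ol P_{ij}(s,t)\equiv 1$, that is, to $\ol P$ being a transition probability matrix. If this holds, then any transition probability matrix $P(s,t)$ satisfying (\ref{s23}) dominates $\ol P$ entrywise by part (i), and since both row-sums equal $1$, equality follows term by term; thus $\ol P$ is the unique such matrix, i.e.\ regular. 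Conversely, regularity forces $\ol P$ to be stochastic, and the displayed identity then forces (\ref{s001}).

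\emph{Main obstacle.} The hardest step is promoting the merely a.e.\ existence of $\partial_s P_{ij}$ granted by Definition 3 into the integral identity used in part (i). This demands (a) absolute continuity of $P_{ij}(\cdot,t)$ on $[0,t]$, extracted from Theorem 1(ii) together with the integrability of $q_{ii}$ in Assumption A, and (b) a uniform domination justifying the interchange of $\lim_{h\downarrow 0}$ with the infinite sum over $k\in S$ when computing the backward derivative; the Tonelli interchange in part (ii) is a smaller, related point handled by splitting $\sss{j}q_{kj}$ into its nonnegative and nonpositive parts.
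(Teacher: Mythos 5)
Your overall strategy is the same as the paper's: for part (i), a backward integral relation obtained from the Chapman--Kolmogorov equation via the integrating factor $e^{\int_s^u q_{ii}}$, followed by induction on the iterates $\ol P^{(n)}_{ij}$; for part (ii), summing the forward relation (\ref{s35}) over $j$ and invoking minimality for uniqueness. Part (ii) as you wrote it is fine and is in fact slightly more complete than the paper's one-line argument, since you make the uniqueness step (entrywise domination plus equal row sums forces equality) explicit.

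Part (i), however, contains a genuine gap: you assert the backward relation as an \emph{identity}, and this is not provable in the paper's generality --- indeed it is false in general. Your identity is equivalent to the backward Kolmogorov equation $\partial_s P_{ij}(s,t)=-\sss{k}q_{ik}(s)P_{kj}(s,t)$ holding with equality for \emph{every} $Q(t)$-transition matrix $P$. Writing
$h^{-1}[P_{ij}(s+h,t)-P_{ij}(s,t)] = h^{-1}[1-P_{ii}(s,s+h)]P_{ij}(s+h,t)-\sum_{k\neq i}h^{-1}P_{ik}(s,s+h)P_{kj}(s+h,t)$,
Fatou's lemma controls the sum over $k\neq i$ only from below, giving $\partial_s P_{ij}\le -\sss{k}q_{ik}(s)P_{kj}(s,t)$ (the paper's Lemma 2(i)). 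To get the reverse inequality you would need $\limsup_{h\to 0^+}\sum_{k\neq i}h^{-1}P_{ik}(s,s+h)P_{kj}(s+h,t)\le\sum_{k\neq i}q_{ik}(s)P_{kj}(s,t)$, and the standard truncation argument for this uses $\sum_{k\neq i}h^{-1}P_{ik}(s,s+h)\to -q_{ii}(s)$ together with $\sum_{k\neq i}q_{ik}(s)=-q_{ii}(s)$; here the row sums of $P$ may be $<1$ and $Q(t)$ need not be conservative, so an excess term $-q_{ii}(s)-\sum_{k\neq i}q_{ik}(s)\ge 0$ survives and no dominating function exists. The "domination argument" you invoke is therefore precisely the missing ingredient, not a routine step. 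The repair is exactly what the paper does: prove only the one-sided versions, namely $\partial_s P_{ij}(s,t)\le -\sss{k}q_{ik}(s)P_{kj}(s,t)$ and then
\be
P_{ij}(s,t)\ \ge\ \delta_{ij}\,e^{\intst q_{ii}(v)dv}+\intsss e^{\int_s^u q_{ii}(v)dv}\,[q_{ik}(u)+d_{ik}(u)]\,P_{kj}(u,t)\,du ,\nonumber
\ee
which is all your induction actually uses (the nonnegativity of $q_{ik}+d_{ik}$ and of $P_{kj}$ makes the inductive step go through with $\ge$ throughout). Your point (a) about absolute continuity of $s\mapsto P_{ij}(s,t)$ is a legitimate concern shared with the paper's own integration-by-parts step, but it is a secondary issue; the equality claim is the substantive error.
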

\begin{proof}
See subsection \ref{sec2:subsec4}.
\end{proof}


\section{PROOFS}
\label{sec2} \setcounter{equation}{0}

\subsection{Proof of Theorem \ref{dd0}}
\label{sec2:subsec1}

To prove Theorem \ref{dd0} we will use the following lemma in
which, for $h>0$ and an integer $m\geq 1$, we denote by
${}_AP_{i,B}(s,s+mh)$ the probability of transition from the state
$i$ at time $s$ to the set $B$ at time $s+mh$ while avoiding the set
$A$ at times $s+kh$ for $k=1,\ldots,m-1$. Observe that
$${}_AP_{i,B}(s,s+h)=P_{i,B}(s,s+h),$$ and, for $m\geq 2$,
$${}_AP_{i,B}(s,s+mh)=\sum_{r\not\in A} {}_AP_{ir}(s,s+(m-1)h) P_{r,B}(s+(m-1)h,s+mh).$$

\begin{lemma}
     \label{le}
 Let $P(s,t)=(P_{ij}(s,t),  i,j\in S)$ be a nonhomogeneous pretransition matrix and, for $0\leq s<t$, let $0<h<t-s, \ n:=[h^{-1}(t-s)]$, and $1\leq m\leq n$. Then:
\begin{enumerate}
\item
  For every $i\in S$ and  $B\subset S$,
  \beb
   \label{s004}
  \quad\quad    P_{i,B}(s,t) & =& \sum_{l=1}^m \sum_{k\in A}
    {}_AP_{ik}(s,s+lh) P_{k,B}(s+lh,t) \nonumber \\
    \label{s004}
    & &   + \sum_{k\not\in A} {}_AP_{ik}(s,s+mh) P_{k,B}(s+mh,t).
  \eeb
 \item  For every $0<\varepsilon<1/3$, there exists $0<\delta<1$ such that
   when $t-s<\delta$, we have
  \be
   \label{s002}
  \quad\quad P_{ij}(s,t)\ge (1-3\varepsilon) \sum_{l=1}^n  P_{ij}(s+(l-1)h,s+lh)\ \ \forall \ j\neq i.
   \ee
  \end{enumerate}
 \end{lemma}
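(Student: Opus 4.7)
Part~(i) is a first-passage decomposition proved by induction on $m$, and part~(ii) follows by applying~(i) twice with $A=\{j\}$ together with the uniform continuity supplied by Theorem~\ref{dd0}.

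For~(i), the base case $m=1$ is immediate from the Chapman--Kolmogorov identity~(\ref{s12}): split $\sum_{k\in S}P_{ik}(s,s+h)P_{k,B}(s+h,t)$ into $k\in A$ and $k\notin A$, and use the convention ${}_AP_{i,B}(s,s+h)=P_{i,B}(s,s+h)$. For the inductive step I expand $P_{k,B}(s+mh,t)$ via C--K with intermediate time $s+(m+1)h$ inside the last sum of the level-$m$ identity; after interchanging the sums on $k$ and $r$, the recursive definition of ${}_AP$ identifies the $k$-sum as ${}_AP_{ir}(s,s+(m+1)h)$, and splitting $\sum_r=\sum_{r\in A}+\sum_{r\notin A}$ produces the level-$(m+1)$ identity.

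For~(ii), fix $j\neq i$ and apply~(i) with $A=B=\{j\}$ and $m=n$. Dropping the nonnegative second sum yields
\[
P_{ij}(s,t)\ \ge\ \sum_{l=1}^{n}{}_{\{j\}}P_{ij}(s,s+lh)\,P_{jj}(s+lh,t).
\]
Retaining only the $r=i$ term in the recursive expansion of ${}_{\{j\}}P_{ij}(s,s+lh)$ gives ${}_{\{j\}}P_{ij}(s,s+lh)\ge {}_{\{j\}}P_{ii}(s,s+(l-1)h)\,P_{ij}(s+(l-1)h,s+lh)$. A second use of~(i), with $A=\{j\}$, $B=\{i\}$, $m=l-1$ (and $P_{ki}(u,u)=\delta_{ki}$), produces the identity
\[
{}_{\{j\}}P_{ii}(s,s+(l-1)h)=P_{ii}(s,s+(l-1)h)-\sum_{l'=1}^{l-1}{}_{\{j\}}P_{ij}(s,s+l'h)\,P_{ji}(s+l'h,s+(l-1)h),
\]
and a third use, with $A=B=\{j\}$, $m=l-1$, yields $P_{ij}(s,s+(l-1)h)=\sum_{l'=1}^{l-1}{}_{\{j\}}P_{ij}(s,s+l'h)P_{jj}(s+l'h,s+(l-1)h)$, which controls the total first-passage mass $\sum_{l'}{}_{\{j\}}P_{ij}(s,s+l'h)$ once $P_{jj}$ is close to~$1$. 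By Theorem~\ref{dd0}(iii)--(iv), for any $\eta>0$ there is $\delta\in(0,1)$ such that $P_{ii}(u,v),\,P_{jj}(u,v)\ge 1-\eta$ and $P_{ji}(u,v)\le\eta$ whenever $0\le v-u<\delta$, uniformly in the state indices; taking $\eta$ small enough in terms of $\varepsilon$ and requiring $t-s<\delta$ makes every relevant time gap $<\delta$ and leads to ${}_{\{j\}}P_{ii}(s,s+(l-1)h)\ge 1-2\varepsilon$ and $P_{jj}(s+lh,t)\ge 1-\varepsilon$. Multiplying the three lower bounds and summing over $l=1,\ldots,n$ delivers~(\ref{s002}).

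The main obstacle is the bookkeeping that produces the constant $1-3\varepsilon$: two factors of $\varepsilon$ are lost in bounding ${}_{\{j\}}P_{ii}$ from below (one from $P_{ii}\ge 1-\varepsilon$, one from the ``$-\sum_{l'}$'' correction term, estimated via the small factor $P_{ji}\le\eta$ times a uniformly bounded first-passage mass), and the third is the factor $P_{jj}(s+lh,t)\ge 1-\varepsilon$. The uniformity in $t$ and in the state indices supplied by Theorem~\ref{dd0}(iii)--(iv) is essential since a single $\delta$ must control every $l=1,\ldots,n$ simultaneously.
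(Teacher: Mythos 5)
Your proposal is correct and follows essentially the same route as the paper: induction on $m$ via the Chapman--Kolmogorov equation for part (i), and for part (ii) the specialization $A=B=\{j\}$, $m=n$, the lower bound ${}_{\{j\}}P_{ij}(s,s+lh)\ge {}_{\{j\}}P_{ii}(s,s+(l-1)h)P_{ij}(s+(l-1)h,s+lh)$, the decomposition $P_{ii}={}_{\{j\}}P_{ii}+\sum_{l'}{}_{\{j\}}P_{ij}P_{ji}$, and control of the first-passage mass $\sum_{l'}{}_{\{j\}}P_{ij}(s,s+l'h)\le\varepsilon/(1-\varepsilon)$ from $P_{jj}$ being near $1$, yielding the product of three near-unity factors and hence $1-3\varepsilon$. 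The only cosmetic differences are that the paper bounds the first-passage mass once over the whole interval $[s,t]$ rather than over each $[s,s+(l-1)h]$, and your ``third use of (i)'' should be stated as an inequality (the nonnegative remainder term from (i) is dropped), which does not affect the argument.
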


\begin{proof}
(i) We will use induction on $m$. For $m=1$, (\ref{s004}) holds by
the definition of ${}_AP_{i,B}(s,s+mh)$ and the C-K equation
(\ref{s12}). Now assume that (\ref{s004}) holds for $m-1$. We will
prove that it holds for $m$. Indeed, by the definition of
${}_AP_{i,B}(s,s+mh)$, the C-K equation (\ref{s12}) again and the
induction hypothesis, we have
 \bee
    \lefteqn{ P_{i,B}(s,t)} \\
    & =& \sum_{l=1}^{m-1} \sum_{k\in A}
    {}_AP_{ik}(s,s+lh) P_{k,B}(s+lh,t) \\
    & &  + \sum_{k\not\in A} {}_AP_{ik}(s,s+(m-1)h) P_{k,B}(s+(m-1)h,t)\\
     &= & \sum_{l=1}^m  \sum_{k\in A} {}_AP_{ik}(s,s+lh) P_{k,B}(s+lh,t)
      -  \sum_{k\in A}  {}_AP_{ik}(s,s+mh) P_{k,B}(s+mh,t) \\
     & & + \sum_{k\not\in A} {}_AP_{ik}(s,s+(m-1)h) P_{k,B}(s+(m-1)h,t) \\
    & &  + \sum_{k\not\in A} {}_AP_{ik}(s,s+mh) P_{k,B}(s+mh,t)
     -   \sum_{k\not\in A} {}_AP_{ik}(s,s+mh) P_{k,B}(s+mh,t)\\
    &= & \sum_{l=1}^m \sum_{k\in A} {}_AP_{ik}(s,s+lh) P_{k,B}(s+lh,t)
     + \sum_{k\not\in A} {}_AP_{ik}(s,s+mh)  P_{k,B}(s+mh,t) \\
    & & + \sum_{k\not\in A} {}_AP_{ik}(s,s+(m-1)h) P_{k,B}(s+(m-1)h,t)\\
    & & - \sum_{k\in S} {}_AP_{ik}(s,s+mh) P_{k,B}(s+mh,t)\\
    &= & \sum_{l=1}^m \sum_{k\in A} {}_AP_{ik}(s,s+lh) P_{k,B}(s+lh,t)
      + \sum_{k\not\in A} {}_AP_{ik}(s,s+mh) P_{k,B}(s+mh,t).
   \eee
This completes the induction.

 \vspace{3mm}
 (ii) Taking $B=A=\{j\}$ and $m=n$ in (\ref{s004}), we obtain
  \be
  \label{s005}
   P_{ij}(s,t)=\sum_{l=1}^n  {}_jP_{ij}(s,s+lh)
     P_{jj}(s+lh,t)+\sum_{k\ne j} {}_jP_{ik}(s,s+nh)
  P_{kj}(s+nh,t).
  \ee
On the other hand, (\ref{s13}) implies that for any given
$0<\varepsilon<1/3$ and $j\ne i$, there exists $0<\delta<1$ such
that when $0<t-s<\delta$, we have
  $ P_{ii}(s,t)>1-\varepsilon,\  P_{jj}(s,t)>1-\varepsilon,\ {\rm and} \
  P_{ij}(s,t)<\varepsilon $. These facts together with
(\ref{s005}) imply that for  $ h<t-s<\delta$ and $j\neq i$
    \bee
   \quad\quad\quad\quad\quad   \varepsilon > 1-P_{ii}(s,t) &\ge& \sum_{k\ne i} P_{ik}(s,t) \ge P_{ij}(s,t) \\
      &\ge& \sum_{l=1}^n {}_jP_{ij}(s,s+lh) P_{jj}(s+lh,t) \\
      &\ge& (1-\varepsilon) \sum_{l=1}^n {}_jP_{ij}(s,s+lh),
      \eee
so
 $$ \sum_{l=1}^n {}_jP_{ij}(s,s+lh) \le \frac{\varepsilon}{1-\varepsilon}.$$
Note that, for each $1\le l\le n$,
$$P_{ii}(s,s+lh) =  {}_jP_{ii}(s,s+lh)
+ \sum_{m=1}^{l-1} {}_jP_{ij}(s,s+mh) P_{ji}(s+mh,s+lh). $$
Thus
 \bee
 \quad \quad\ {}_jP_{ii}(s,s+lh)&=& P_{ii}(s,s+lh)-\sum_{m=1}^{l-1} {}_jP_{ij}(s,s+mh)
  P_{ji}(s+mh,s+lh)\\
  &\ge& P_{ii}(s,s+lh)-\sum_{m=1}^{l-1} {}_jP_{ij}(s,s+mh)\\
  &\ge& 1-\varepsilon-\frac{\varepsilon}{1-\varepsilon},
  \eee
and so
 \bee
 \quad  P_{ij}(s,t) &\ge& \sum_{l=1}^n {}_jP_{ij}(s,s+lh)
   P_{jj}(s+lh,t)\\
   &=& \sum_{l=1}^n  \sum_{r\ne j} {}_jP_{ir}(s,s+(l-1)h) P_{rj}(s+(l-1)h,s+lh)
   P_{jj}(s+lh,t)\\
   &\ge& \sum_{l=1}^n   {}_jP_{ii}(s,s+(l-1)h) P_{ij}(s+(l-1)h,s+lh)
   P_{jj}(s+lh,t)\\
   &\ge& (1-\varepsilon-\frac{\varepsilon}{1-\varepsilon})
    \sum_{l=1}^n P_{ij}(s+(l-1)h,s+lh) (1-\varepsilon)\\
    &\ge& (1-3\varepsilon) \sum_{l=1}^n P_{ij}(s+(l-1)h,s+lh).
    \eee
This completes the verification of (3.2) and also the proof of the
lemma.
\end{proof}

With Lemma \ref{le}, we can easily prove Theorem \ref{dd0}.


\noindent
{\it Proof of Theorem \ref{dd0}.}

(i) By the C-K equation (\ref{s12}), we obtain
$$P_{ii}(s,t)\ge \prod_{k=1}^n P_{ii}\left(s+\frac{k-1}{n} (t-s),s+\frac{k}{n} (t-s)\right)\ \ \forall n\ge 1.$$
Hence, for $n$ sufficiently large, (\ref{s14}) follows from (\ref{s13}).

(ii) Let $0\le u\le v\le t<\infty$. By the C-K equation
(\ref{s12})
$$ P_{ij}(u,t)-P_{ij}(v,t)=\sum_{k\ne i}
P_{ik}(u,v)P_{kj}(v,t)+(P_{ii}(u,v)-1)P_{ij}(v,t). $$
 Applying (\ref{s11}), we obtain
$$ P_{ij}(u,t)-P_{ij}(v,t)\ge (P_{ii}(u,v)-1)P_{ij}(v,t)\ge P_{ii}(u,v)-1, $$
$$ P_{ij}(u,t)-P_{ij}(v,t)\le \sum_{k\ne i}
P_{ik}(u,v)P_{kj}(v,t)\le \sum_{k\ne i} P_{ik}(u,v) \le
1-P_{ii}(u,v).$$
These inequalities yield (\ref{s15}).

(iii) Using (\ref{s15}) and (\ref{s13}), for each $h\ge0 $ we
have
$$ |P_{ij}(s+h,t)-P_{ij}(s,t)|\le 1-P_{ii}(s,s+h) \to 0 \ \ {\rm as} \ \ h \to 0^+,$$
$$ |P_{ij}(s,t)-P_{ij}(s-h,t)|\le 1-P_{ii}(s-h,s) \to 0 \ \ {\rm as} \ \ h \to 0^+,$$
which together with (\ref{s13}) yield (iii).

(iv) By the C-K equation (\ref{s12}), for each $ h\ge0 $ we
have
$$P_{ij}(s,t+h)-P_{ij}(s,t) = \sum_{k\ne j} P_{ik}(s,t)P_{kj}(t,t+h)- (1-P_{jj}(t,t+h)) P_{ij}(s,t).$$
Therefore
$$P_{ij}(s,t+h)-P_{ij}(s,t) \ge - \sss{k} (1-P_{kk}(t,t+h)) P_{ik}(s,t),$$
and
$$P_{ij}(s,t+h)-P_{ij}(s,t) \le  \sum_{k\ne j}
P_{ik}(s,t)P_{kj}(t,t+h)
  \le \sss{k} P_{ik}(s,t)(1-P_{kk}(t,t+h)).$$
Hence, as $h\rightarrow 0^+$,
$$ |P_{ij}(s,t+h)-P_{ij}(s,t)| \le \sss{k} P_{ik}(s,t)(1-P_{kk}(t,t+h))\rightarrow 0, $$
by (\ref{s13}) and the Dominated Convergence Theorem \cite{Ash00}.
It follows that $P_{ij}(s,t)$ is right-continuous in $t \in [s,+\infty)$,
uniformly in $j\in S$.

Similar arguments show that $P_{ij}(s,t)$ is left-continuous in $t \in [s,+\infty)$, uniformly in $j\in S$.

(v) To avoid trivial situations we suppose that $i\in S$ is not an
absorbing state, i.e., $P_{ii}(s,t)\equ 1$. For $0\leq s\leq
t<+\infty$, let $f(s,t):=-\log P_{ii}(s,t)$, which is a well-defined
function, nonnegative and finite. Since $P_{ii}(s,t)\ge
P_{ii}(s,u)P_{ii}(u,t),$ for $0\le s\le u \le t<\infty$,  we have
$f(s,t)\le f(s,u)+f(u,t)$. Now for each $s\ge 0$, let $q_i(s):=
\sup\limits_{s<t<\infty} \frac{f(s,t)}{t-s}$. We will next prove
that the limit of $\frac{f(s,t)}{t-s}$ exists and equals $q_i(s)$.

Obviously, by definition of $q_i(s)$,
$$ \limsup_{t-s \to 0^+} \frac{f(s,t)}{t-s} \le q_i(s).$$
Therefore, it is sufficient to argue that $ \liminf\limits_{t-s \to 0^+}
\frac{f(s,t)}{t-s} \ge q_i(s)$.\ \ Given any $0<h<t-s$, take $n$
such that $t-s=nh+\varepsilon,$ with $0\le \varepsilon<h$. Then
$$\frac{f(s,t)}{t-s}\le \frac{nh}{t-s}\frac{f(s,s+nh)}{nh}
+\frac{f(s+nh,t)}{t-s}.$$
Now take the limit of both sides as $h\to 0^+$, and note that $nh\to t-s\ {\rm as}
\ \varepsilon\to 0^+$, so that the continuity of $P_{ij}(s,t)$ implies that
$f(s+nh,t)=f(t-\varepsilon,t)\to 0$. Hence we have
  $$ \frac{f(s,t)}{t-s}\le \liminf_{h\to 0^+} \frac{f(s,s+nh)}{nh}
  = \liminf_{t-s \to 0^+} \frac{f(s,t)}{t-s},$$
and so
$$ \lim_{t-s \to 0^+} \frac{f(s,t)}{t-s} = q_i(s). $$
Finally, recalling the definition of $f(s,t)$, we have
  $$  \lim_{t-s\to 0^+} \frac{1-P_{ii}(s,t)}{t-s}
  = \lim_{t-s\to 0^+} \frac{1-e^{-f(s,t)}}{t-s}
  = \lim_{t-s\to 0^+} \frac{1-e^{-f(s,t)}}{f(s,t)} \frac{f(s,t)}{t-s} =q_i(s). $$
This proves the first part of (v). Next we prove the
second part.

To this end, first note that (\ref{s13}) implies that for any given
 $0<\varepsilon<1/3$ and $j\ne i$, there exists $0<\delta<1$ such that when $ 0<t-s<\delta$, we have
  $ P_{ii}(s,t)>1-\varepsilon,\  P_{jj}(s,t)>1-\varepsilon,\ {\rm and} \
  P_{ij}(s,t)<\varepsilon$. Since (\ref{s13}) holds uniformly in $j\in S$, for $ 0<h<t-s$, $A\subset S,\ {\rm and} \ i\not\in A$, it follows from (\ref{s002}) that
 \be
 \label{s006}
 \quad\quad\quad\quad\quad\quad  P_{i,A}(s,t) \ge (1-3\varepsilon) \sum_{l=1}^n
   P_{i,A}(s+(l-1)h,s+lh).
 \ee
In particular, taking  $A=S-\{i,j\}$ in (\ref{s006}), we obtain
   \be
   \label{s007}
  \quad\quad\quad\quad  P_{i, S-\{i,j\}}(s,t) \ge (1-3\varepsilon) \sum_{l=1}^n P_{i,
    S-\{i,j\}}(s+(l-1)h,s+lh).
   \ee
On the other hand, taking $B=A=S-\{i\} \ {\rm and} \  m=n$ in (\ref{s004}), it follows that
 \bee
   P_{i, S-\{i\}}(s,t) &=&  \sum_{l=1}^n \sum_{k\in S-\{i\}}
    {}_AP_{ik}(s,s+lh) P_{k, S-\{i\}}(s+lh,t)  \\
    &  &  +\; {}_AP_{ii}(s,s+nh) P_{i, S-\{i\}}(s+nh,t)\\
   & \le &  \sum_{l=1}^n  \sum_{k\in S-\{i\}}  {}_AP_{ik}(s,s+lh) + {}_AP_{ii}(s,s+nh) P_{i, S-\{i\}}(s+nh,t) \\
   & = &  \sum_{l=1}^n   {}_AP_{i, S-\{i\}}(s,s+lh) + {}_AP_{ii}(s,s+nh) P_{i, S-\{i\}}(s+nh,t) \\
     & = &  \sum_{l=1}^n  {}_AP_{ii}(s,s+(l-1)h) P_{i,
     S-\{i\}}(s+(l-1)h,s+lh) \\
     & &  +\; {}_AP_{ii}(s,s+nh) P_{i, S-\{i\}}(s+nh,t)\ \ \ ({\rm since}\ A=S-\{i\});
\eee
 consequently,
 \beb
  \quad\quad\quad\quad\quad   P_{i,S-\{i\}}(s,t) & \le &  \sum_{l=1}^n   P_{i, S-\{i\}}(s+(l-1)h,s+lh) \nonumber\\
\label{s008}
  & &  + \; {}_AP_{ii}(s,s+nh) P_{i, S-\{i\}}(s+nh,t).
\eeb
Subtracting (\ref{s007}) from (\ref{s008}), and using
(\ref{s007}) again, we obtain
 \beb
\!\!  P_{ij}(s,t) &\le&  \sum_{l=1}^n P_{ij}(s+(l-1)h,s+lh) +
 3\varepsilon \sum_{l=1}^n
  P_{i, S-\{i,j\}}(s+(l-1)h,s+lh) \nonumber\\
  & & + \; {}_AP_{ii}(s,s+nh) P_{i, S-\{i\}}(s+nh,t) \nonumber\\
   &\le&  \sum_{l=1}^n P_{ij}(s+(l-1)h,s+lh) +
   \frac{3\varepsilon}{1-3\varepsilon}  P_{i, S-\{i,j\}}(s,t) \nonumber\\
 \label{s009}
  & & + \; {}_AP_{ii}(s,s+nh) P_{i, S-\{i\}}(s+nh,t).
\eeb
 Recalling that $\varepsilon$ was arbitrary, taking the limit
of both sides of (\ref{s009}) as $\varepsilon \to 0$, we see that
  \be\label{s012}
  P_{ij}(s,t) \le  \sum_{l=1}^n P_{ij}(s+(l-1)h,s+lh)
   + \; {}_AP_{ii}(s,s+nh) P_{i, S-\{i\}}(s+nh,t).
\ee
 Summarizing, by (\ref{s002})
  \be
  \label{s010}
 \quad\quad\quad\quad\quad  (1-3\varepsilon) \sum_{l=1}^n  \frac{P_{ij}(s+(l-1)h,s+lh)}{nh} \le
  \frac{P_{ij}(s,t)}{nh},
  \ee
whereas by (\ref{s012})
 \be
  \label{s013}
    \frac{P_{ij}(s,t)}{nh}\le\sum_{l=1}^n  \frac{P_{ij}(s+(l-1)h,s+lh)}{nh}
    +\frac{ {}_AP_{ii}(s,s+nh) P_{i, S-\{i\}}(s+nh,t)}{nh}.
  \ee

To conclude, note that $nh\to t-s$ as $h\to 0^+$. Hence, using (\ref{s13}), (\ref{s010}), and (\ref{s013}) we obtain
 \be
  \label{s003}
 \quad\quad\quad\quad  \limsup_{h\to 0^+} \sum_{l=1}^n  \frac{P_{ij}(s+(l-1)h,s+lh)}{nh}
   = \limsup_{t-s\to 0^+}  \frac{P_{ij}(s,t)}{t-s}.
   \ee
From this equality and (\ref{s010}) it follows that
$$ \limsup_{t-s\to 0^+}  \frac{P_{ij}(s,t)}{t-s}
\le \frac{1}{1-3\varepsilon} \frac{P_{ij}(s,t)}{t-s}. $$
Hence, taking the limit infimum of both sides we obtain
$$ \limsup_{t-s\to 0^+}  \frac{P_{ij}(s,t)}{t-s}
      \le  \frac{1}{1-3\varepsilon} \liminf_{t-s\to 0^+} \frac{P_{ij}(s,t)}{t-s}.  $$
Finally, letting  $ \varepsilon\to 0$ we conclude the proof of part (v).

(vi) By (2.1), $P_{ii}(s,t)+\sum_{j\not=i} P_{ij}(s,t)\leq 1$ for
all $i,j\in S$ and $t\geq s\geq 0$, or, equivalently,
$\sum_{j\not=i}P_{ij}(s,t)\leq 1-P_{ii}(s,t)$. Hence (vi) follows
from (v) and Fatous' Lemma. $ \hfill$  $\Box$

\subsection{Proof of Theorem \ref{dd1}}
\label{sec2:subsec2}

\begin{proof}
(i) By Assumption A and the definition of $q_{ij}(t)$, the functions $\ol
P_{ij}^{(n)}(s,t)$ and $\ol Q_{ij}^{(n)}(s,t)$ are well defined for
every $n\geq 0$.

To prove (\ref{pqe}), we use induction on $n\ge 0$. Obviously,
 $\ol P_{ij}^{(0)}(s,t) = \ol Q_{ij}^{(0)}(s,t)$ and
 $\ol P_{ij}^{(1)}(s,t) = \ol Q_{ij}^{(1)}(s,t)$, by (\ref{s33}) and
 (\ref{q02}). Now assume that (\ref{pqe}) holds for some $n\ge 0$;
 we will show that it holds for $n+1$. By the induction hypothesis and
 (\ref{s33}) and (\ref{q02}), we have
 \bee
  \ol P_{ij}^{(n+1)}(s,t) &=& \int_s^t \sum\limits_{k\in S}\ e^{\int_s^u q_{ii}(v)dv} [q_{ik}(u)+d_{ik}(u)] \ol Q_{kj}^{(n)}(u,t) du
  \\
  &=& \int_s^t \sum\limits_{k\in S}\ e^{\int_s^u q_{ii}(v)dv} [q_{ik}(u)+d_{ik}(u)]\cdot
  \\
  & & \left[ \int_u^t \sum\limits_{l\in S}\ \ol Q_{kl}^{(n-1)}(u,x)\ e^{\int_x^t  q_{jj}(v)dv} [q_{lj}(x)+d_{lj}(x)] dx \right] du
  \\
  &=& \int_s^t \sum\limits_{l\in S}\ e^{\int_x^t  q_{jj}(v)dv} [q_{lj}(x)+d_{lj}(x)]\cdot
  \\
  & & \left[ \int_s^x \sum\limits_{k\in S}\ e^{\int_s^u  q_{ii}(v)dv} [q_{ik}(u)+d_{ik}(u)] \ol Q_{kl}^{(n-1)}(u,x) du \right] dx
  \\
  &=& \int_s^t \sum\limits_{l\in S}\ e^{\int_x^t  q_{jj}(v)dv}
  [q_{lj}(x)+d_{lj}(x)] \ol P_{il}^{(n)}(s,x) dx
  \\
  &=& \int_s^t \sum\limits_{l\in S}\ \ol Q_{il}^{(n)}(s,x)\  e^{\int_x^t  q_{jj}(v)dv}
  [q_{lj}(x)+d_{lj}(x)] dx
  \\
  &=& \ol Q_{ij}^{(n+1)}(s,t),
  \eee
Consequently, (\ref{pqe}) holds for $n+1$. This completes the
induction and verifies (i).

(ii) To prove (\ref{s35}), we first prove, by induction on $n$, the
following statement: for every $n\ge 0\ {\rm and} \ t\ge s\ge 0$, we
have
 \be
 \label{s36}
  \!\!\!\!\!\!\intsss \ol P_{ik}^{(n+1)}(s,v)d_{kj}(v) dv =\!\!\intsss \ol P_{ik}^{(n)}(s,v)
  [q_{kj}(v)+d_{kj}(v)]dv - \ol P_{ij}^{(n+1)}(s,t).
 \ee

Indeed, by (\ref{s32}) and (\ref{s31}), for $n=0$ a direct calculation gives
$$\intsss \ol P_{ik}^{(0)}(s,v)d_{kj}(v) dv = \delta_{ij}-\ol P_{ij}^{(0)}(s,t),$$
and so (\ref{s33}) together with (\ref{s32}) and (\ref{s31}) gives
 \bee
  \lefteqn{\intsss \ol P_{ik}^{(1)}(s,v)d_{kj}(v) dv} \\
  &=&  \intsss \int_s^v \sum\limits_{l\in S} \e{s}{u}{x} [q_{il}(u)+d_{il}(u)] \delta_{lk}\  e^{\int_u^v  q_{ll}(x)dx}\ \delta_{kj} (-q_{kk}(v)) du dv
  \\
  &=& \intsss \int_s^v  \e{s}{u}{x} [q_{ik}(u)+d_{ik}(u)] \delta_{kj}\ e^{\int_u^v  q_{kk}(x)dx} (-q_{kk}(v)) du dv
  \\
  &=& \intsss  \e{s}{u}{x} [q_{ik}(u)+d_{ik}(u)] \delta_{kj} \left[\int_u^t e^{\int_u^v  q_{kk}(x)dx} (-q_{kk}(v)) dv \right] du
  \\
  &=& \intsss  \e{s}{u}{x} [q_{ik}(u)+d_{ik}(u)] \delta_{kj} (1-e^{\int_u^t q_{kk}(x)dx}) du
  \\
  &=& \intsss \delta_{ik} \e{s}{u}{x} [q_{kj}(u)+d_{kj}(u)] du
  \\
  & & - \intsss \e{s}{u}{x} [q_{ik}(u)+d_{ik}(u)] \  \ol P_{kj}^{(0)}(u,t) du
  \\
  &=& \intsss  \ol P_{ik}^{(0)}(s,u)[q_{kj}(u)+d_{kj}(u)]du - \ol P_{ij}^{(1)}(s,t).
  \eee
Hence, (\ref{s36}) holds for $n=0$.

Now assume that (\ref{s36}) holds for some $n\geq 0$; we will show that it
holds for $n+1$. By the induction hypothesis and (\ref{s33}), we
 obtain
 \bee
  \lefteqn{ \intsss \ol P_{ik}^{(n+2)}(s,v)d_{kj}(v) dv} \\
  &=& \intsss \left[ \int_s^v \sum\limits_{l\in S} \e{s}{u}{x} [q_{il}(u)+d_{il}(u)] \ \ol P_{lk}^{(n+1)}(u,v) \ du \right] d_{kj}(v) dv
  \\
  &=& \intst \sss{l} \e{s}{u}{x} [q_{il}(u)+d_{il}(u)]\left[ \int_u^t \sss{k} \ol P_{lk}^{(n+1)}(u,v)d_{kj}(v) dv\right] du
  \\
  &=& \intst \sss{l} \e{s}{u}{x} [q_{il}(u)+d_{il}(u)]\cdot
  \\
  & &\left[\int_u^t \sss{k} \ol P_{lk}^{(n)}(u,v)[d_{kj}(v)+q_{kj}(v)] dv  -\ol P_{lj}^{(n+1)}(u,t)\right]du
  \\
  &=& \intst \sss{k} \int_u^t \sss{l} \e{s}{u}{x} [q_{il}(u)+d_{il}(u)]\ \ol P_{lk}^{(n)}(u,v)[d_{kj}(v)+q_{kj}(v)]\ dv du
   \\
  & & -\intst \sss{l} \e{s}{u}{x} [q_{il}(u)+d_{il}(u)]\ \ol P_{lj}^{(n+1)}(u,t)du
  \\
  &=& \intst \sss{k} \left[\int_s^v \sss{l} \e{s}{u}{x}[q_{il}(u)+d_{il}(u)]\ \ol P_{lk}^{(n)}(u,v) du \right]\cdot
  \\
  & & [d_{kj}(v)+q_{kj}(v)]dv - \ol P_{ij}^{(n+2)}(s,t) \\
  &=& \intsss \ol P_{ik}^{(n+1)}(s,v)[d_{kj}(v)+q_{kj}(v)]dv - \ol P_{ij}^{(n+2)}(s,t).
     \eee
Consequently, (\ref{s36}) holds for $n+1$ and this completes the induction.

Now note that (\ref{s36}) gives
 $$
   \sum_{n=0}^{\infty} \ol P_{ij}^{(n+1)}(s,t)  =\!\! \intsss \!
\sum_{n=0}^{\infty} \ol P_{ik}^{(n)}(s,v)[d_{kj}(v)+q_{kj}(v)]dv  -
\intsss \! \sum_{n=0}^{\infty} \ol
  P_{ik}^{(n+1)}(s,v)d_{kj}(v)dv.
 $$
This equality and (2.12) yield
 \bee
 \!\!\!\!\! \sum_{n=0}^{\infty} \ol P_{ij}^{(n)}(s,t)
  &=& \delta_{ij}-\intsss  \ol P_{ik}^{(0)}(s,v)d_{kj}(v) dv + \sum_{n=0}^{\infty} \ol
  P_{ij}^{(n+1)}(s,t)\\
  &=& \delta_{ij} + \intsss \ol{P}_{ik}(s,v)(d_{kj}(v)+q_{kj}(v))dv -\intsss
  \ol{P}_{ik}(s,v)d_{kj}(v) dv \\
  &=& \intsss \ol{P}_{ik}(s,v) q_{kj}(v)dv + \delta_{ij},
  \eee
 which proves (\ref{s35}).

(iii) The proof of (\ref{qbackw}) is quite similar to that of
(\ref{s35}). We first prove, by induction on $n$, the following
statement, which is analogous to (3.12): for every $n\ge 0\ {\rm
and} \ t\ge s\ge 0$,
 \be
 \label{q04}
  \!\!\!\!\!\intsss d_{ik}(v) \ol Q_{kj}^{(n+1)}(v,t) dv =\!\!\intsss [q_{ik}(v)+d_{ik}(v)] \ol
  Q_{kj}^{(n)}(v,t) dv - \ol Q_{ij}^{(n+1)}(s,t).
 \ee
Indeed, by (\ref{q01}), for $n=0$ we obtain
$$\intsss d_{ik}(v) \ol Q_{kj}^{(0)}(v,t) dv = \delta_{ij}-\ol Q_{ij}^{(0)}(s,t).$$
This fact and (\ref{q02}) yield
 \bee
 \quad \lefteqn{ \intsss d_{ik}(v) \ol Q_{kj}^{(1)}(v,t) dv} \\
  &=& \intsss d_{ik}(v) \left[ \int_v^t \sum\limits_{l\in S} \ol Q_{kl}^{(0)}(v,u) e^{\int_u^t q_{jj}(x)dx} [q_{lj}(u)+d_{lj}(u)]\ du \right] dv
  \\
  &=& \intst \int_v^t \sum\limits_{l\in S} \delta_{il} (-q_{ii}(v))\ e^{\int_v^u  q_{ii}(x)dx} e^{\int_u^t q_{jj}(x)dx} [q_{lj}(u)+d_{lj}(u)]\ du dv
  \\
  &=& \intst \sum\limits_{l\in S} \delta_{il} \left[\int_s^u (-q_{ii}(v))\ e^{\int_v^u  q_{ii}(x)dx} dv \right] e^{\int_u^t q_{jj}(x)dx} [q_{lj}(u)+d_{lj}(u)]\ du
  \\
  &=& \intst \sum\limits_{l\in S} \delta_{il} (1 - e^{\int_s^u  q_{ii}(x)dx})\ e^{\int_u^t q_{jj}(x)dx} [q_{lj}(u)+d_{lj}(u)]\ du
  \\
  &=& \intst \sum\limits_{l\in S} [q_{il}(u)+d_{il}(u)]\ \delta_{lj}\ e^{\int_u^t q_{ll}(x)dx}\ du
  \\
  & & - \intst \sum\limits_{l\in S} \ol Q_{il}^{(0)}(s,u) e^{\int_u^t q_{jj}(x)dx} [q_{lj}(u)+d_{lj}(u)]\ du
  \\
  &=& \intst \sum\limits_{l\in S} [q_{il}(u)+d_{il}(u)] \ol Q_{lj}^{(0)}(u,t) du - \ol Q_{ij}^{(1)}(s,t).
  \eee
Hence, (\ref{q04}) holds for $n=0$.

Now assume that (\ref{q04}) holds for some $n\ge 0$; we will show
that it holds for $n+1$. By the induction hypothesis and
(\ref{q02}), we obtain
 \bee
   \lefteqn{ \intsss d_{ik}(v) \ol Q_{kj}^{(n+2)}(v,t) dv }\\
  &=& \intsss d_{ik}(v) \left[ \int_v^t \sum\limits_{l\in S} \ol Q_{kl}^{(n+1)}(v,u) e^{\int_u^t q_{jj}(x)dx} [q_{lj}(u)+d_{lj}(u)]\ du \right] dv
  \\
  &=& \intst \sum\limits_{l\in S} \left[\int_s^u \sum\limits_{k\in S} d_{ik}(v) \ol Q_{kl}^{(n+1)}(v,u)\ dv \right] e^{\int_u^t q_{jj}(x)dx} [q_{lj}(u)+d_{lj}(u)]\ du
  \\
  &=& \intst \sum\limits_{l\in S} \left[\int_s^u \sum\limits_{k\in S} [q_{ik}(v)+d_{ik}(v)] \ol Q_{kl}^{(n)}(v,u)\ dv -  \ol Q_{il}^{(n+1)}(s,u) \right]\cdot
  \\[6pt]
  & & e^{\int_u^t q_{jj}(x)dx} [q_{lj}(u)+d_{lj}(u)]\ du
  \\[6pt]
  &=& \intst \sum\limits_{l\in S} \left[\int_s^u \sum\limits_{k\in S} [q_{ik}(v)+d_{ik}(v)] \ol Q_{kl}^{(n)}(v,u)\ dv \right]e^{\int_u^t q_{jj}(x)dx} [q_{lj}(u)+d_{lj}(u)]\ du
  \\
  & & - \intst \sum\limits_{l\in S} \ol Q_{il}^{(n+1)}(s,u)\ e^{\int_u^t q_{jj}(x)dx} [q_{lj}(u)+d_{lj}(u)]\ du
  \\
  &=& \intsss [q_{ik}(v)+d_{ik}(v)] \left[\int_v^t \sum\limits_{l\in S}  \ol Q_{kl}^{(n)}(v,u)\ e^{\int_u^t q_{jj}(x)dx} [q_{lj}(u)+d_{lj}(u)] du \right] dv
  \\[6pt]
  & & - \ol Q_{ij}^{(n+2)}(s,t)
  \\[6pt]
  &=& \intsss [q_{ik}(v)+d_{ik}(v)] \ol Q_{kj}^{(n+1)}(v,t)\ dv -  \ol Q_{ij}^{(n+2)}(s,t).
  \eee
Consequently, (\ref{q04}) holds for $n+1$ and this completes the
induction.

Now note that (\ref{q04}) gives
$$ \sum_{n=0}^{\infty} \ol
Q_{ij}^{(n+1)}(s,t) =\!\! \int_s^t \! \sum_{k\in S}
  [q_{ik}(v)+d_{ik}(v)] \sum_{n=0}^{\infty} \ol Q_{kj}^{(n)}(v,t)dv
   - \int_s^t \! \sum_{k\in S} d_{ik}(v) \sum_{n=0}^{\infty} \ol Q_{kj}^{(n+1)}(v,t)dv.
 $$
Therefore, by (\ref{q03}),
 \bee
 \!\!\!\!\!\!  \sum_{n=0}^{\infty} \ol Q_{ij}^{(n)}(s,t)
  &=& \delta_{ij}-\intsss  d_{ik}(v) \ol Q_{kj}^{(0)}(v,t) dv + \sum_{n=0}^{\infty} \ol Q_{ij}^{(n+1)}(s,t)\\
  &=& \delta_{ij} + \intsss [q_{ik}(v)+d_{ik}(v)] \ol Q_{kj}(v,t)dv
  -\intsss d_{ik}(v) \ol Q_{kj}(v,t) dv \\
  &=& \intsss q_{ik}(v) \ol Q_{kj}(v,t) dv + \delta_{ij}.
  \eee
This equality and (2.16) give (\ref{qbackw}).

(iv) To prove that $\ol P(s,t)$ is a nonhomogeneous
$Q(t)$-transition matrix we need to show that $\ol P_{ij}(s,t)$
 satisfies (\ref{s11})--(\ref{s13}), and that the partial
derivatives in (i)--(ii) of Definition 3 exist.

To prove (2.1) we already know that $\ol P_{ij}^{(0)}(s,t)\ge 0$, by (\ref{s32}).
Suppose now that $\ol P_{ij}^{(n)}(s,t)\ge 0$ for some $n$. To prove that this
holds for $n+1$, we use (\ref{s33}) and (\ref{s21}) to obtain
 \bee
\quad \ol P_{ij}^{(n+1)}(s,t) & =& \int_s^t \sum\limits_{k\in S}
\e{s}{u}{v} [q_{ik}(u)+d_{ik}(u)]  \ol P_{kj}^{(n)}(u,t) du
 \\
 &=&   \intst   \sum_{k\ne i \atop k\in S}  \e{s}{u}{v} q_{ik}(u) \ol P_{kj}^{(n)}(u,t)
 du \ge 0 \qquad \mbox{(by (\ref{s31}))}.
 \eee
Hence $ \ol P_{ij}(s,t)\ge 0$.

To prove that $ \ol P_{ij}(s,t)$ satisfies the second part of
(\ref{s11}), it suffices to show that
 \be
 \label{s37}
 \quad\quad\quad\quad\quad\quad\quad\quad\quad  \sss{j} \ol P_{ij}^{(n)}(s,t) \le 1\  \  \forall n\ge 0,
  \ee
because then in a similar manner we can prove show that
  \be
 \label{s38}
 \quad\quad\quad \quad\quad\quad \quad\quad \sss{j} \sum_{n=0}^N  \ol P_{ij}^{(n)}(s,t) \le 1 \ \ \ \forall N\ge 0.
 \ee
Therefore, since
$$  \ol P_{ij}(s,t)  = \sum_{n=0}^{\infty} \ol P_{ij}^{(n)}(s,t)=\lim_{N\to \infty} \sum_{n=0}^N  \ol P_{ij}^{(n)}(s,t),$$
(\ref{s11}) follows. Now, to prove (\ref{s37}), we use induction on
$n$. For $n=0$, (3.14) trivially holds, by (\ref{s32}).

Suppose now that (\ref{s37}) holds for some $n$, that is,
$$ \sss{j} \ol P_{ij}^{(n)}(s,t) \le 1.$$
To see that this holds for $n+1$, we use (\ref{s33}) and monotone convergence to obtain
 \bee
\! \sss{j} \ol P_{ij}^{(n+1)}(s,t)
 &=& \intsss \e{s}{u}{v} [q_{ik}(u)+d_{ik}(u)] \left[\sss{j} \ol P_{kj}^{(n)}(u,t)\right] du
   \\
   &\le & \intst \e{s}{u}{v} \sss{k} q_{ik}(u) du + \intsss \e{s}{u}{v} d_{ik}(u)du.
   \eee
Hence, by (\ref{s22}) and (\ref{s31}),
$$\sss{j} \ol P_{ij}^{(n+1)}(s,t) \le \intsss \e{s}{u}{v} \delta_{ik}(- q_{ii}(u)) du
= 1-\e{s}{t}{u} \le  1,$$
which yields (\ref{s37}).

Now we will verify that $  \ol P_{ij}(s,t)$ satisfies the C-K
equation (\ref{s12}). Observe that this holds if and only if, for
every $n\geq 0$ and $s\le u \le t$,
 \be
  \label{s39}
 \quad\quad\quad\quad\quad\quad\quad  \ol P_{ij}^{(n)}(s,t) =  \sum_{m=0}^n   \sss{k}    \ol P_{ik}^{(m)}(s,u)  \ol P_{kj}^{(n-m)}(u,t).
   \ee

We will prove (3.16) by induction. In fact, for $n=0$
(\ref{s39}) follows from (\ref{s32}). Suppose now that (\ref{s39})
holds for some $n\geq 0$. To prove (\ref{s39}) for $n+1$, we use the
induction hypothesis and (\ref{s33}), to obtain, for any $s\le r\le t$,
 \bee
\!\!\!\!\!\!  \ol P_{ij}^{(n+1)}(s,t)
  &=& \intsss \e{s}{u}{v} [q_{ik}(u)+d_{ik}(u)] \ol P_{kj}^{(n)}(u,t) du
  \\
  &=& \int_s^r \sss{k} \e{s}{u}{v} [q_{ik}(u)+d_{ik}(u)] \left[\sum_{m=0}^n  \sss{l}   \ol P_{kl}^{(m)}(u,r) \ol P_{lj}^{(n-m)}(r,t) \right] du
  \\
  & & +\int_r^t \sss{k}  \e{s}{u}{v} [q_{ik}(u)+d_{ik}(u)]  \ol P_{kj}^{(n)}(u,t) du
  \\
  &=& \sum_{m=0}^n \sss{l}  \ol P_{il}^{(m+1)}(s,r) \ol P_{lj}^{(n-m)}(r,t)+B,
  \eee
where$$B =\int_r^t \sss{k}  \e{s}{u}{v} [q_{ik}(u)+d_{ik}(u)]
 \ol P_{kj}^{(n)}(u,t) du. $$
Since (\ref{s39}) holds for $n=0$, recalling (2.10) we then have
  $$B= \ol P_{ii}^{(0)}(s,r)  \ol P_{ij}^{(n+1)}(r,t)= \sss{l}  \ol P_{il}^{(0)}(s,r) \ol P_{lj}^{(n+1)}(r,t), $$
and so
$$  \ol P_{ij}^{(n+1)}(s,t)= \sum_{m=0}^{n+1} \sss{l}  \ol P_{il}^{(m)}(s,r)
   \ol P_{lj}^{(n+1-m)}(r,t). $$
Hence, (\ref{s39}) holds for all $n$, and, as already noted,
 (\ref{s12}) follows for $\ol P_{ij}(s,t)$.

To see that $\ol P_{ij}(s,t)$ satisfies (\ref{s13}), using
(\ref{s31})-(\ref{s34}), (\ref{s21}), (\ref{s22}) and (\ref{s38}),
we obtain
\bee
 \lefteqn{ |\ \ol P_{ij}(s,t) - \delta_{ij}| }\\
  &\le& (1-\e{s}{t}{v}) + \sum_{n=0}^\infty \ol P_{ij}^{(n+1)}(s,t)
  \\
  &=& (1-\e{s}{t}{v}) + \intsss \e{s}{u}{v} [q_{ik}(u)+d_{ik}(u)] \sum_{n=0}^\infty \ol P_{kj}^{(n)}(u,t) du
  \\
  &\le& (1-\e{s}{t}{v}) + \intsss \e{s}{u}{v} [q_{ik}(u)+d_{ik}(u)] du
  \\
  &\le& (1-\e{s}{t}{v}) + \intst \e{s}{u}{v} [-q_{ii}(u)] du
  \\
  &\to& 0\ \ \ {\rm as}\  t\to s^{+}.
  \eee
This implies the desired result.

To summarize, we have just shown that $ \ol P(s,t)
=(\ol P_{ij}(s,t),\ \ i,j\in S)$ is a nonhomogeneous pretransition
 matrix. Therefore, to complete the proof that it is a nonhomogeneous $Q(t)$-transition matrix,
it only remains to verify that $\ol P(s,t)$ satisfies (i) and (ii) in Definition 3. But in fact from (\ref{s35}), (\ref{qbackw}) and the
fundamental theorem of calculus for Lebesgue integrals we can obtain a bit more than (i),
namely, {\it Kolmogorov's forward}  and {\it backward equations}
  \begin{eqnarray}
   &&\quad\quad\quad\quad\quad\quad\quad\quad\  \frac{\partial \ol P_{ij}(s,t)}{\partial t}=\sss{k}\ol P_{ik}(s,t) q_{kj}(t), \nonumber  \\
   &&\quad\quad\quad\quad\quad\quad\quad\quad  \frac{\partial \ol P_{ij}(s,t)}{\partial s} = - \sss{k}
    q_{ik}(s) \ol P_{kj}(s,t)  \nonumber
 \end{eqnarray}
for a.a. $t\ge s\ge 0$. Moreover, if we take $t=s$ in the forward
equation, we obtain (2.8). This verifies (iv) and it also completes
the proof of Theorem 2.
\end{proof}


\subsection{Proof of Theorem \ref{dd3}}
\label{sec2:subsec4}

To prove Theorem \ref{dd3} we need the following facts.

\begin{lemma}
 \label{dd2}
 Assume that $Q(t)$ is a nonhomogeneous $Q(t)$-matrix satisfying
Assumption A. Then for any nonhomogeneous $Q(t)$-transition matrix
$P(s,t)=(P_{ij}(s,t),\ \ i,j\in S)$ we have, for every $i,j\in S$
and $0\le s< t<\infty$,
 \begin{enumerate}
 \item \be
  \label{s43}
  \!\!\!\!\!\!\!\!\!\!\!\!\!\!   \frac{\partial P_{ij}(s,t)}{\partial s} \le  - \sss{k} q_{ik}(s) P_{kj}(s,t);
\ee
  \item \be
  \label{s44}
 \!\!\!\!\!\!\!\!\!\!\! \!\!\!  P_{ij}(s,t) \ge  \intsss \e{s}{u}{v}
[q_{ik}(u)+d_{ik}(u)] P_{kj}(u,t) du
    + \delta_{ij} \e{s}{t}{u}\!\!\!.
\ee
\end{enumerate}
\end{lemma}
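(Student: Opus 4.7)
The plan is to obtain (\ref{s43}) by differentiating the Chapman--Kolmogorov equation at $s^+$, then to derive (\ref{s44}) from (\ref{s43}) by an integrating-factor argument on $[s,t]$. For (i), I would fix $t$ and, for small $h>0$, rewrite the C--K equation (\ref{s12}) with intermediate time $u=s+h$ as
\[
 \frac{P_{ij}(s+h,t)-P_{ij}(s,t)}{h} = \frac{1-P_{ii}(s,s+h)}{h}\,P_{ij}(s+h,t) - \sum_{k\ne i}\frac{P_{ik}(s,s+h)}{h}\,P_{kj}(s+h,t).
\]
At any $s$ where the partial derivative $\partial P_{ij}(s,t)/\partial s$ exists, the left-hand side converges to it. On the right, Theorem \ref{dd0}(iv)--(v) gives that the first summand converges to $-q_{ii}(s)P_{ij}(s,t)$, while for the infinite sum Fatou's lemma (using $q_{ik}(s)\ge 0$ for $k\ne i$ and continuity of $P_{kj}(\cdot,t)$) yields
\[
 \liminf_{h\to 0^+}\sum_{k\ne i}\frac{P_{ik}(s,s+h)}{h}\,P_{kj}(s+h,t)\ \ge\ \sum_{k\ne i} q_{ik}(s)\,P_{kj}(s,t).
\]
Since the difference quotient and the first right-hand term have genuine limits, the infinite sum's limit exists as well; combining the three pieces proves $\partial P_{ij}(s,t)/\partial s \le -\sum_{k\in S} q_{ik}(s)P_{kj}(s,t)$, which is (\ref{s43}).

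For (ii), I would introduce the integrating factor $\mu(u):=\exp(-\int_u^t q_{ii}(v)\,dv)$ on $[s,t]$, which by Assumption A is absolutely continuous with $\mu'(u)=q_{ii}(u)\mu(u)$ a.e. Noting from (\ref{s31}) that $q_{ik}(u)+d_{ik}(u)$ equals $q_{ik}(u)$ for $k\ne i$ and vanishes for $k=i$, multiplying (\ref{s43}) by $\mu(u)$ and adding $\mu'(u)P_{ij}(u,t)$ produces
\[
 \frac{d}{du}\bigl[\mu(u)P_{ij}(u,t)\bigr]\ \le\ -\mu(u)\sum_{k\in S}[q_{ik}(u)+d_{ik}(u)]\,P_{kj}(u,t)\qquad \text{for a.e.\ } u\in[s,t].
\]
Integrating this from $s$ to $t$, using $\mu(t)=1$ and $P_{ij}(t,t)=\delta_{ij}$, and finally multiplying through by $e^{\int_s^t q_{ii}(v)\,dv}$ yields exactly (\ref{s44}).

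The main technical obstacle is justifying the fundamental-theorem-of-calculus step above, since Definition 3 only guarantees continuity and a.e.\ differentiability of $P_{ij}(\cdot,t)$, not absolute continuity. I would handle this by first proving the auxiliary lower bound $P_{ii}(s,t)\ge \exp(\int_s^t q_{ii}(v)\,dv)$: the C--K equation yields $P_{ii}(s,v+h)\ge P_{ii}(s,v)P_{ii}(v,v+h)$, so $g(v):=-\log P_{ii}(s,v)$ satisfies $g(v+h)-g(v)\le -\log P_{ii}(v,v+h)$, and Theorem \ref{dd0}(v) together with $-\log(1-x)\sim x$ gives $D^+g(v)\le -q_{ii}(v)$; the standard result that a continuous function with upper right Dini derivative dominated a.e.\ by an integrable function obeys the corresponding integrated inequality then finishes this step. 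Combined with the elementary $1-e^{-x}\le x$ and Theorem \ref{dd0}(ii), this yields the Lipschitz-type bound $|P_{ij}(u+h,t)-P_{ij}(u,t)|\le \int_u^{u+h}(-q_{ii}(v))\,dv$, which shows $P_{ij}(\cdot,t)$ is absolutely continuous on $[s,t]$ and validates the integrating-factor computation.
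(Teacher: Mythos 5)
Your proof is essentially the paper's own: part (i) is the identical Chapman--Kolmogorov decomposition at intermediate time $s+h$ followed by Fatou's lemma, and part (ii) is the same integrating-factor computation (the paper uses the factor $e^{\int_s^u q_{ii}(v)dv}$, you use $e^{-\int_u^t q_{ii}(v)dv}$; they differ by the constant $e^{\int_s^t q_{ii}(v)dv}$, so the integration by parts is the same). Your extra discussion of why the fundamental theorem of calculus applies goes beyond the paper, which silently integrates $\partial P_{ij}(u,t)/\partial u$; establishing the integrable Lipschitz modulus $|P_{ij}(u+h,t)-P_{ij}(u,t)|\le \int_u^{u+h}(-q_{ii}(v))\,dv$ via Theorem \ref{dd0}(ii) is the right way to close that gap. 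One caveat: the ``standard result'' you invoke --- that a continuous function whose upper right Dini derivative is dominated \emph{a.e.} by an integrable function satisfies the integrated inequality --- is false as stated (the Cantor function has $D^+c=0$ a.e.\ yet increases); you need the version in which the bound holds everywhere off a countable set, or you must separately control $D^+g$ on the null set where the intrinsic rate of $P$ may differ from the given $q_{ii}$. Since Theorem \ref{dd0}(v) gives existence of the limit defining $D^+g(v)$ at \emph{every} $v$, this is repairable, but as written that auxiliary step is the one place your argument is not airtight.
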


\begin{proof}
 (i) To prove (\ref{s43}),   we use the C-K equation (\ref{s12})  to obtain
 \beb
  \!\!\!\!\!\!\!&&\!\!\!\!\! \frac {1}{h} [P_{ij}(s+h,t)-P_{ij}(s,t)] \nonumber  \\
  \label{s47}
  \!\!\!\!\!\!\!&=&\!\!\!\!\! \frac {1}{h} [1- P_{ii}(s,s+h)] P_{ij}(s+h,t)\!-\!\!\!\sum_{k\ne i \atop k\in S} \frac {1}{h}
  P_{ik}(s,s+h)P_{kj}(s+h,t).
  \eeb
Hence, by Fatou's Lemma, (\ref{s23}) and Theorem \ref{dd0}(iii) we have
 \be
  \label{s48}
 \quad\quad\quad\quad  \liminf_{h\to 0^+}\sum_{k\ne i \atop k\in S} \frac {1}{h}
  P_{ik}(s,s+h)P_{kj}(s+h,t) \ge \sum_{k\ne i \atop k\in S} q_{ik}(s)
  P_{kj}(s,t).
  \ee
Then (i) follows from Definition 3, (\ref{s47}), and (\ref{s48}).

(ii) By (\ref{s43}), we have
 \bee
    \lefteqn{ \intsss \e{s}{u}{v} [q_{ik}(u)+d_{ik}(u)] P_{kj}(u,t) du}
    \\
    &=& \intsss \e{s}{u}{v} q_{ik}(u)P_{kj}(u,t) du +
      \intsss \e{s}{u}{v} d_{ik}(u)P_{kj}(u,t) du   \\
    & \le & -  \intst \e{s}{u}{v} \frac{\partial P_{ij}(u,t)}{\partial u}
    du + \intst \e{s}{u}{v} (-q_{ii}(u)) P_{ij}(u,t) du \\
    &=&  -\delta_{ij} \e{s}{t}{v}  + P_{ij}(s,t).
  \eee
This yields (\ref{s44}), and completes the proof of Lemma 2.
\end{proof}

With Lemma \ref{dd2}, we can easily prove Theorem \ref{dd3}.

\noindent
{\it Proof of Theorem \ref{dd3}.}

\vspace{2mm}
 (i)\ Let $ \ol P_{ij}^{(n)}(s,t) \ {\rm and} \ \ol P_{ij}(s,t)$ be as defined
 in Theorem \ref{dd1}, and let $P_{ij}(s,t)$ be as in Lemma \ref{dd2}. We will prove (2.19) by showing that (3.21), below, holds for all $n\ge 0$.

By Lemma \ref{dd2}, we know that $P_{ij}(s,t)$ satisfies (\ref{s43}), which for $i=j$ becomes
$$
  \frac{\partial P_{ii}(u,t)}{\partial u} \le  - \sss{k} q_{ik}(u) P_{ki}(u,t) \ \ \forall \ i\in S,\ \ 0\le u\le
  t<\infty. $$
  Since $q_{ij}(u)\ge 0 \ {\rm for} \ i\ne j$, we have
$ \frac{\partial  P_{ii}(u,t)}{\partial u} \le  -  q_{ii}(u)
P_{ii}(u,t) $, and so $  -\frac{d P_{ii}(u,t)}{P_{ii}(u,t)} \ge
q_{ii}(u) du $. Integrating from $s$ to $t$  on both sides and using
(\ref{s32}), we obtain $  P_{ii}(s,t) \ge  \ol P_{ii}^{(0)}(s,t)$.
Hence
$$  P_{ij}(s,t) \ge  \ol P_{ij}^{(0)}(s,t) \ \ \forall \ i,j\in S. $$

Suppose now that for some $n\ge 0$,
 \be
   \label{s56}
  \quad\quad\quad\quad\quad\quad\quad\quad\quad\quad\quad
   P_{ij}(s,t) \ge  \sum_{m=0}^n  \ol P_{ij}^{(m)}(s,t).
\ee To prove that this holds for $n+1$, we use (\ref{s44}) and the
induction hypothesis to obtain
  \bee
 \!\!  P_{ij}(s,t)
  & \ge & \delta_{ij} \e{s}{t}{u} + \intsss \e{s}{u}{v}
  [q_{ik}(u)+d_{ik}(u)]
    P_{kj}(u,t) du    \\
  &\ge &  \delta_{ij} \e{s}{t}{u} + \sum_{m=0}^n  \intsss \e{s}{u}{v}
  [q_{ik}(u)+d_{ik}(u)]
     \ol P_{kj}^{(m)}(u,t) du  \\
  &=&   \ol P_{ij}^{(0)}(s,t)+ \sum_{m=0}^n  \ol P_{ij}^{(m+1)}(s,t)\quad\mbox{(by (2.10), (2.11))} \\
  &=&  \sum_{m=0}^{n+1}  \ol P_{ij}^{(m)}(s,t).
  \eee
Therefore, (\ref{s56}) holds for all $n\ge 0$, and letting $n\to \infty $ in (\ref{s56}), we obtain (\ref{s55}).

(ii)\ By Definition 3 and (\ref{s35}), we know that $ \ol P(s,t) =(\ol P_{ij}(s,t),\ \ i,j\in S )$  is regular if and only if
  $$  \sss{j}  \left[\int_s^t \sum\limits_{k\in
S} \ol P_{ik}(s,v)q_{kj}(v) dv +
     \delta_{ij} \right]  \equiv  1, $$
that is
 $$  \sss{j} \left[\int_s^t \sum\limits_{k\in S} \ol P_{ik}(s,v)q_{kj}(v) dv \right] \equiv 0, $$
which is the same as (2.20). This completes the proof of Theorem 3.
$ \hfill$  $\Box$

\section{ CONCLUSIONS}

In this paper we have presented a fairly detailed, self--contained,
exposition of the construction of a $Q(t)$-transition matrix
starting from a nonhomogeneous $Q(t)$-matrix that satisfies a very
mild measurability condition. Moreover, such a transition matrix is in fact
the {\it minimum} $Q(t)$-transition matrix and we have presented a
necessary and sufficient condition for it to be unique and regular.
In short, this paper efficiently generalizes the main results of a
nonhomogeneous $Q(t)$-transition matrix with continuous and
conservative transition rates $q_{ij}(t)$, to the case in which the
$q_{ij}(t)$ are measurable and may not be conservative.


\begin{thebibliography}{10}

\bibitem{Anderson91} Anderson, W.J. (1991). {\it Continuous Time Markov
Chains}. Springer, New York.

\bibitem{Ash00} Ash, R.B. (2000). {\it Probability and Measure
Theory, Second Edition}. Academic, London.

\bibitem {B03} Breuer, B. (2003). {\it From Markov Jump Processes to
Spatial Queues.} Kluwer, Dordrecht.

\bibitem{Chang06} Chang, H.S. (2006). Perfect information two-person zero-sum markov games
with imprecise transition probabilities. {\it  Math. Meth. Oper.
Res.} {\bf 64}, 335-351.

\bibitem {F40} Feller, W. (1940).  On the integro-differential
equations of purely discontinuous Markoff processes. {\it Trans.
Amer. Math. Soc.} {\bf 48}, 488-515.

\bibitem {GS96} Gikhman, I.I. and Skorokhod, A.V. (1996).  {\it Introduction
to the Theory of Random Processes}. Dover, Mineola, NY. (This is a
reprint of the 1969 edition published by Saunders, Philadelphia,
PA.).

\bibitem  {GH05} Guo, X.P. and Hern$\acute{\rm {a}}$ndez-Lerma, O. (2005).  Nonzero-sum games  for continuous--time  Markov chains with unbounded
discounted payoffs. {\it J. Appl. Probab.} {\bf 42}, 303--320.

\bibitem  {GH03a} Guo, X.P.  and Hern$\acute{\rm {a}}$ndez-Lerma,
O. (2003). Zero-sum games  for continuous--time  Markov chains with
unbounded transition and average payoff rates. {\it J. Appl.
Probab.} {\bf 40}, 327-345.

\bibitem  {GH03b} Guo, X.P.  and Hern$\acute{\rm {a}}$ndez-Lerma, O. (2003). Drift and monotonicity conditions for continuous--time controlled Markov chains with an average criterion. {\it IEEE Trans. Autom. Control} {\bf 48},
236-245.

\bibitem{Guo03} Guo, X.P. and Hern\'andez--Lerma,O. (2003).
 Continuous--time controlled Markov chains. {\it Ann. Appl. Prob.} {\bf 13},
 363--388.

\bibitem{Guo02} Guo, X.P. and Zhu, W.P. (2002).  Denumerable-state
continuous-time Markov decision processes with unbounded transition
and reward rates under the discounted criterion. {\it J. Appl.
Probab.} {\bf 39}, 233--250.

\bibitem{HuDH83} Hu, D.H. (1983).  {\it Markov Process Theory with Countable State}. Wuhan University Press, Wuhan. (In
Chinese.)

\bibitem{HuQY96} Hu, Q.Y. (1996).  Nonstationary
continuous time Markov decision processes with the expected total
reward criterion. {\it Optimization} {\bf 36}, 181--189.

\bibitem{K.P75} Kakumanu P. (1975).  Continuous time Markovian decision process with average return criterion. {\it J. Math. Anal. Appl.} {\bf 52},
173--188.

\bibitem{LiuJY04} Liu, J.Y., Hu, Q.Y. and Wang, J.M. (2004).  The foundational
 assumption of continuous--time Markov decision process. {\it Acta Math. Appl.
 Sinica} {\bf 27}, 756--759. (In Chinese.)

\bibitem{Puterman94} Puterman, M.L. (1994).  {\it Markov Decision
Processes}. Wiley, New York.

\bibitem{Sennott99} Sennott, L.I. (1999).  {\it  Stochastic Dynamic Programming and the Control of Queueing Systems}. Wiley, New
York.

\bibitem{Sinha04} Sinha, S. and Jana, S. (2004).  Semi-infinite semi-Markov stochastic games. {\it Opsearch} {\bf 41}, 278-290.

\bibitem{Stidham93} Stidham, S. and Weber, R. (1993).  A survey of
Markov decision models for the control of networks of queues. {\it
Queueing Systems} {\bf 13}, 291--314.

\bibitem{WuCB97a} Wu, C.B. (1997).  Continuous time Markov decision process with unbounded reward and nonuniformly bounded transtion rates under discounted criterion.
{\it Acta Math. Appl. Sinica} {\bf 20}, 196--208. (In Chinese.)





\end{thebibliography}
\end{document}